\title{Sub-Riemannian mean curvature flow for image processing} 
\author{G. Citti\thanks{Dipartimento di Matematica, Universit\'{a} di Bologna. ({giovanna.citti@unibo.it})} \and B. Franceschiello\thanks{Center of Mathematics, CNRS-EHESS, Paris ({benedetta.franceschiello@ehess.fr})} \and  G. Sanguinetti\thanks{Department of Mathematics and Computer Science, TUE. ({g.r.sanguinetti@tue.nl})}\and A. Sarti\thanks{Center of Mathematics, CNRS-EHESS, Paris ({alessandro.sarti@ehess.fr})}}
\theoremstyle{plain}
\newtheorem{theorem}{Theorem}[section]
\newtheorem{lemma}[theorem]{Lemma}
\newtheorem{corollary}[theorem]{Corollary}
\newtheorem{proposition}[theorem]{Proposition}
\newtheorem{definition}[theorem]{Definition}
\newcommand{\R}{\mathbb R}
\begin{document}
\maketitle

\begin{abstract}
In this paper we reconsider the sub-Riemannian cortical model of image completion introduced in \cite{A5, A4}. 
This model combines two mechanisms, the sub-Riemannian diffusion and the concentration, giving rise to a diffusion driven motion by curvature. In this paper we give a formal proof of the existence of viscosity solutions of the sub-Riemannian motion by curvature. Furthermore we illustrate the sub-Riemannian finite difference scheme used to implement the model and we discuss some properties of the algorithm. Finally results of completion and enhancement on a number of natural images are shown and compared with other models.
\end{abstract}

%\begin{keywords}\end{keywords}
%\begin{AMS}\end{AMS}

%\pagestyle{myheadings}
%\thispagestyle{plain}
\markboth{G. Citti et al.}{Sub-Riemannian mean curvature flow }

\section{Introduction}

In this work we aim to face the problem of completion (also known as inpaiting) and contours enhancement: both require an analysis of elongated structures, such as level lines or contours. The word \textit{inpainting}, first coined by Bertalmio, Sapiro, Caselles, and Ballester in \cite{Bertalmio}, refers to a technique whose purpose is to restore damaged portions of an image. In terms of psychology of vision inpainting corresponds to \textit{amodal completion}, a phenomenon described by Kanizsa in \cite{Kanizsa}, which consists in the completion of the occluded part of an image. This problem can be faced with many techniques such as geometric and variational instruments which start from the selection of existing boundaries or algorithms acting directly on the image space, based on the Gestalt law of good continuation and the classical findings of psychology of vision. This last class of algorithms goes back to the works 
of Kanizsa \cite{Kanizsa}, Ullman \cite{Ullm}, Horn \cite{horn} and find the prototype of models for curve completion into Mumford's Elastica functional (see \cite{K1}): 
$$\int_\gamma \sqrt{1 + k^2} \mbox{\,}ds.$$
This expression minimizes a functional defined on the curve itself, where $k$ is the curvature along the curve $\gamma$, parametrized by arc length $s$. 
The models proposed by Mumford, Nitzberg and Shiota, see \cite{K1} and \cite{Mumford}, and by Masnou and Morel, see \cite{Morel}, generalize this principle to the level sets of a gray-valued image. In fact, if $D$ is a squared defined in $\mathbb{R}^2$ and $I:D\rightarrow \mathbb{R}$ is an image, the considered functionals are second order functionals defined on $I$:
$$
\int_D |\nabla I|\bigg( 1+ \bigg|div \bigg(\frac{\nabla I}{|\nabla I|}\bigg) \bigg|^2\bigg)\,dxdy.
$$
A model expressed by a system of two equations, one responsible for boundary extraction, and one for figure completion was proposed by Bertalmio, Sapiro, Caselles, and Ballester in \cite{Bertalmio}. 
In Sarti et al \cite{SMS} the role of the observer was considered, letting evolve by curvature in the Riemannian metric associated to the image a fixed surface called \textit{point of view surface}. 

Close to the completion problem there is the one of enhancement, allowing to make the structures of images more visible, while reducing the noise. Also for contours enhancement, models expressed as non linear PDE dependent on the gradient were proposed in the same years. The first models were due to Nitzberg
and Shiota \cite{NS1992}, Cottet and Germain \cite{CG1993}. Later on Weickert \cite{W1998, W1999} proposed a coherence-enhancing diffusion method, called CED. Let us also mention the models of Tschumperl\'e \cite{T2006}, who included curvature into the diffusion process in order to improve enhancement.

\bigskip

A different class of algorithms to perform both completion and enhancement is directly inspired by the functionality of the visual cortex, 
which is expressed in terms of contact structures and Lie groups with a sub-Riemannian metric. We refer to Section \ref{sec2} for a precise definition of this metric and for references of the mathematical aspects of the problem. 
We only recall here that PDEs in this setting are totally degenerate at every point, so that classical results on PDE 
cannot be applied to study their solutions. Then we will follow the approach provided by Evans and Spruck in \cite{A6}.  The first geometric models of the functionality of the visual cortex date back to the papers of Hoffmann \cite{A20}, Koenderink \cite{Koenderink}, Zucker \cite{A23}. 
Petitot and Tondut in \cite{A21} proposed a model of single boundaries completion through constraint minimization in a contact structures, obtaining a neural counterpart of the models of Mumford, see \cite{K1}. Sarti and Citti in \cite{A4} and \cite{A5} proposed to model the cortical structure as 
Lie group $SE(2)$ of rigid transformations of the plane with a sub-Riemannian metric. 
Each 2D retinotopic image is lifted to a surface in this higher dimensional structure and processed by the long range via a diffusion driven motion by curvature. This model performs amodal completion on the whole image and can be considered the cortical counterpart of the models of Mumford, Nithberg and Shiota \cite{Mumford}.
  A large class of algorithms of image processing in Lie groups were developed by M. Van Almsick, R. and M. Duits and B.M. Ter Haar Romeny in \cite{ALMDUIT}. The main instrument he developed is an invertible map from the 2D retinal image to the feature space expressed in terms of Fourier instruments. He faced both the problem of contour completion and enhancement in presence of 
crossing and bifurcating lines. Let us also recall the results of Boscain \cite{Boscain},  who implemented the diffusion step of the model introduced in \cite{A4} using instruments of Group Fourier transform, similar to the ones proposed by Duits in \cite{DFI}.

\bigskip
The paper is organized as in the following. In Section \ref{sec2} we reconsider the completion model of Citti and Sarti (see \cite{A4}) and propose to extend its application to perform contours enhancement. Indeed we will see that the action of the sub-Riemannian diffusion is strongly directional, 
and it reduces noise only in the direction of contours. Vice versa it performs no diffusion in the orthogonal direction, preserving the structures of the images.

Since the model is formally expressed as a diffusion driven 
motion by curvature, in Section \ref{sec3} we consider the mean curvature equation in the $SE(2)$ group and prove the existence of viscosity solutions. Existence results for this type of equations in the Euclidean setting are well known (see for example \cite{A6})  but in the sub-Riemannian setting the existence of solution was known only for the Heisenberg group (see \cite{A3}, \cite{A26}, \cite{A3}). 
Here we start filling the gap, extending the existence results of \cite{A6} in the present setting. In particular we look for sub-Riemannian viscosity solution as limit of the 
analogous problem in an approximating Riemannian setting. The main difficult we have to face is the fact that in the sub-Riemannian setting the 
derivatives are replaced by directional derivatives, which in general do not commute.  Hence we will have to introduce a new basis of the tangent space which commutes with the natural one.

In section  \ref{sec4} we discuss  the numerical scheme used to implement the mean curvature flow. Finally in section \ref{sec5} we conclude the paper showing some numerical results of completion  and enhancement, and comparing with different methods.

\section{Sub-Riemannian operators in image processing}\label{sec2}
In this section we first recall the cortical model of image completion proposed by Citti and Sarti in \cite{A4} and \cite{A5}. By simplicity we will focus only on the 
image processing aspects of the problem, neglecting all the cortical ones. 
%At every point $(x,y)$ of the domain of an image $I(x,y) \subset \mathbb{R}^2$ in the retina we associate in the cortex the orientation $\bar \theta \in \mathit{S}^1$ of the level line of $I(x,y)$ through that point; this simulates the \textit{intracortical connectivity} of the visual cortex which, selecting the orientation of the visual stimulus, lifts the image from the 2D retinical plane to the 3D cortical space. 
%In this sense the image is lifted to the graph $(x,y, \bar \theta(x,y))$ in the group $SE(2)=\mathbb{R}^2 \times \textit{S}^1$ of rigid motions which has been used by the authors to model the cortex, equipped with a sub-Riemannian metric in order to respect the neuro-physiological evidence. In this structure the image is processed via a diffusion and concentration mechanism, which is equivalent to a curvature flow of the graph. 
Then we show that this mechanism
can be adapted to perform contour enhancement.

\subsection{Processing of an image in a sub-Riemannian structure}

\subsubsection{Lifting of the level lines of an image} \label{limage}

The cortical based model of completion, proposed by Citti and Sarti in \cite{A4} and \cite{A5}, lifts each level line of a 2D image  $I(x,y)$ in the retinal plane to a new curve in the group $SE(2)=\mathbb{R}^2 \times \textit{S}^1$ of rigid motions of the Euclidean plane, used to model the functional architecture of the primary visual cortex (V1). Precisely if a level line of $I$ is represented as a parametrized 2-dimensional curve $\gamma_{2D}=(x(t),y(t))$ and the vector  $\vec{X}_1=(\cos(\bar \theta(t)), \sin(\bar \theta(t)))$ is the unitary tangent to the curve $\gamma_{2D}$ at the point $t$, we say that  $\bar \theta(t)$ is the orientation of $\gamma_{2D}$ at the point $t$. Then the  choice of this orientation lifts the 2D curve $\gamma_{2D}$ to a new curve: \begin{equation}(x(t),y(t)) \rightarrow (x(t),y(t),\bar \theta(t)) \in \mathbb{R}^2 \times \mathit{S}^1 \label{liftingeq} \end{equation}
%We call admissible curve all the curves in $SE(2)$ which are the lifting  of a contour, identified by a planar curve $\gamma_{2D}$. 
as it is shown in figure \ref{liftfigure1}. By construction the tangent vector to any lifted curve $\gamma_{3D}=(x(t),y(t),\bar \theta(t)) $ in $SE(2)$ can be represented as a linear combination of the vector fields: $$X_1 =\cos(\theta) \partial_x + \sin(\theta) \partial_y$$
$$X_2=\partial_\theta.$$
In other words we have associated to every point of  $R^2 \times S^1$ the vector space spanned by vectors $X_1$ and $X_2$.

\begin{figure}
\centering
\includegraphics[width=.4\textwidth]{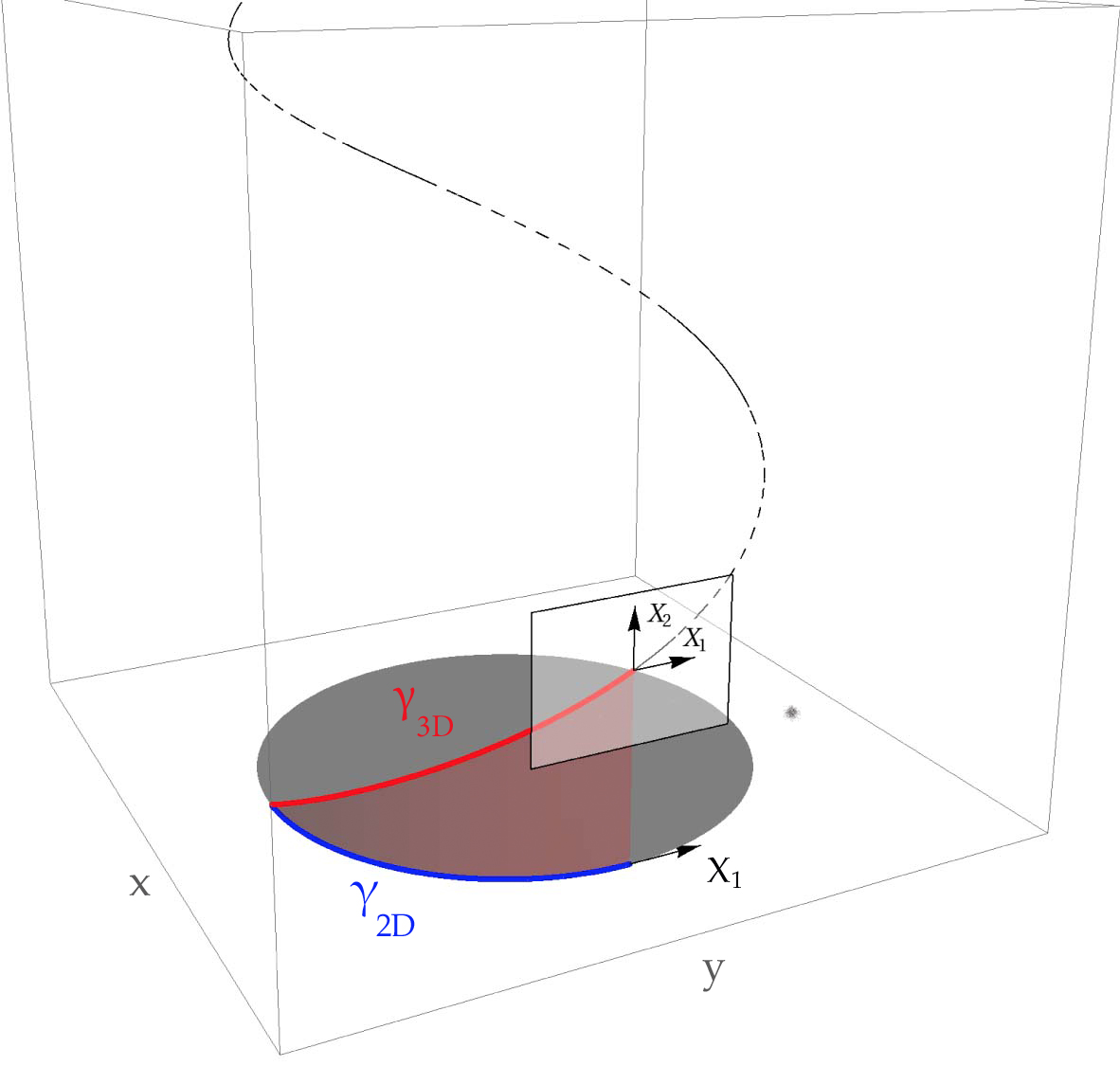} \label{liftfigure1}
\caption{A contour represented by the curve $\gamma_{2D}(t)$ is lifted into the roto-translation group obtaining the red curve $\gamma_{3D}(t)$. The tangent space of the roto-translation group is spanned by the vectors $X_1$ and $X_2$.}
\end{figure}

\subsubsection{The sub-Riemannian structure}

 We will call horizontal plane and denote it as $HM$ the tangent plane generated by $X_1$ and $X_2$  at every point.  
Let us also note that there are no lifted curve with a non-vanishing component in the orthogonal direction $$X_3=  -\sin(\theta) \partial_x + \cos(\theta)\partial_y.$$ 
However derivations in the direction $X_3$ can be recovered by means of the commutator:
 $$X_3= X_1 X_2 - X_2 X_1 = [X_1,X_2]= -\sin(\theta) \partial_x + \cos(\theta)\partial_y.$$
This condition ensures that  $X_1$, $X_2$ and their commutators of any order span the Euclidean tangent space of $SE(2)$ at every given point, i.e. they satisfy the H\"{o}rmander condition, see \cite{Horm2}. Then the structure obtained with this lifting process is sub-Riemannian. On the plane $HM$ we define the metric $g_0$ which makes $X_1$ and $X_2$  orthonormal. Hence, if a vector $a= a_1 X_1 + a_2 X_2 \in HM$, its horizontal norm is:
\begin{equation}
\lvert a \rvert_0 = \sqrt{(a_1)^2 + (a_2)^2 }.
\end{equation}
The first classical properties of the distance in these spaces have been established by Nagel, Stein and Wainger (see \cite{Nagel}), and Gromov (see  \cite{Gromov}).  We refer to Hladky (see \cite{A8}) and the references therein for recent contributions. 

In this setting the vector fields play the same role as derivatives in the standard setting. 
Hence we will say that a function $u: \mathbb{R}^2 \times \mathit{S}^1 \rightarrow \mathbb{R}$ is of class $C^1$ in the sub-Riemannian sense (we will denote it as $u\in C^1_{SR}$) if there exists $X_1 u$ and $X_2 u$ and they are continuous. In this case we will call horizontal gradient $\nabla _0$:
$$\nabla_0 u = (X_1 u) X_1+ (X_2 u) X_2.$$
From the definition stated before it follows that the norm of the horizontal gradient is: \begin{equation} |\nabla _0 u| = \sqrt{(X_1u)^2 + (X_2u)^2 }.\end{equation}
In other words the horizontal gradient is the projection of the standard Euclidean gradient of $u$ on the horizontal plane $HM$.

\subsubsection{Lifting of the image to a regular surface}\label{liftimage}

Since each level line of the image $I$ is lifted to a curve in the 3D cortical space, 
the whole image is lifted to a graph $$(x,y)\rightarrow (x,y, \bar \theta(x,y)).$$
Clearly the graph of $\theta$ can be interpreted as the zero level set of the function $u$
$$u(x,y,\theta)= \theta - \bar \theta(x,y),$$
and it can be identified as a regular surface in the 
sub-Riemannian structure. 
The notion of regular  surface $S$ was first introduced by Franchi, Serapioni and Serracassano in \cite{A24}: 
\begin{equation}
S = \{(x,y,\theta): u(x,y,\theta)=0 \mbox{\, and \,} \nabla_0 u(x,y,\theta) \not =0\}.
\end{equation}
Since the sub-Riemannian surface $S$ is union of horizontal curves, we say that it is foliated in horizontal curves. The horizontal normal of $S$ is defined as $$\nu_0 = \frac{\nabla_0 u}{|\nabla_0 u|} .$$
Note that in 
a smooth surface there can be points where the Riemannian gradient is not $0$, but its projection on the $HM$ plane vanishes:
$$\nabla_0 u =0.$$ 
Points which have this property are called \textit{characteristics} and the normal is not defined at them. However these points are not present in lifted surfaces.

\subsubsection{Diffusion and concentration algorithm}

We have seen in subsection \ref{liftimage}  how to lift an image $I(x,y)$, to a surface $S$.
After that let us lift the level lines of the image $I(x,y)$ to the function $$v(x,y,\bar\theta(x,y))=I(x,y)$$
defined on the surface. The surface $S$ and the function $v$ defined on $S$  will be processed through differential operators defined on $SE(2)$, which model the propagation of information in the cortex. More precisely two mechanisms operate on the lifted surface $S$: 
\begin{enumerate}
\item[(a)] a sub-Riemmanian diffusion along the vector fields $X_1$ and $X_2$ which model the propagation of information through the cortical lateral connectivity. This operator can be expressed as 
$$\partial_t - X_1^2 - X_2 ^2$$
where $X_1^2$ expresses the second derivative in the direction $X_1$. 
The operator is formally degenerated, in the sense that its second fundamental form has $0$ determinant at every point. It has been deeply studied starting from the classical works of H\"ormander in \cite{Hormy},  Rothshild and Stein in \cite{Roth} and  Jerison \cite{Jerison} and it is known that it is hypo-elliptic. After that  a large literature has been produced on these type of operators, and we refer to \cite{Cap2} for recent presentation of the state of the art.
\item[(b)] a concentration on the surface of maxima to model the non-maximal suppression mechanism and the orientation tuning.
\end{enumerate} 
In the Euclidean setting Merrimann, Bence and Osher, proved in \cite{Merriman} the convergence of a similar two step algorithm to the motion by curvature. 
In Citti and Sarti \cite{A4} and \cite{A5} the authors studied the motion when (a) and (b) are 
applied iteratively and proved that at each step the surface performs an increment in the normal direction 
with speed equal to the sub-Riemannian mean curvature. 

\subsubsection{Mean curvature flow}

The notion of curvature of a $\mathit{C}^2$ surface at non characteristic points  is already well understood, see (\cite{Garofalo}, \cite{Hladky}, \cite{Cheng}, \cite{Ritore}, \cite{Cap2}). It can be defined either as first variation of the area functional, either as limit of the mean curvature of the Riemannian approximation or as horizontal divergence of the horizontal normal: 
$$K_0= \mbox{div}_0 (\nu_0) = \mbox{div}_0 \bigg(\frac{\nabla_0 u}{|\nabla_0 u|} \bigg).  $$
If each point of the surface evolves in the direction of the normal vector with speed equal to the mean curvature, we say that the surface is evolving by mean curvature. From the previously expression of the curvature we formally get the following equation for the flow, which we can call horizontal (or sub-Riemannian) mean curvature flow:
\begin{equation}\label{Smcf}
\left\{\begin{array}{lll}
u_t = \sum\limits_{i,j=1}^{2} \left( \delta_{i,j} - \frac{X^0_iuX^0_ju}{|\nabla_0 u|^2} \right) X^0_iX^0_ju & \mbox{ in } & \Omega \subset \mathbb{R}^2\times S^1 \\
u(\cdot ,0)=u_0 & &
\end{array}\right.
\end{equation}
where $\delta_{ij}$ is the Kronecker function. 
%Also this definition is well posed only at non characteristic points.  
An existence result for this equation was not known. We will provide in next section an existence theorem which will allow us to handle also characteristic points, 
and is expressed in terms of viscosity solutions.

\subsubsection{Laplace-Beltrami flow}

Citti and Sarti also conjectured that as a result of the previous mechanisms the function $v(x,y,\bar\theta(x,y))$, which contains the gray-levels values, evolves through the flow described by the Laplace Beltrami operator $\Delta_{LB}$: 
   \begin{equation}\label{Smcl}
             \left\{\begin{array}{lll}
              v_t = \sum\limits_{i,j=1}^{2} \left( \delta_{i,j} - \frac{X^0_iuX^0_ju}{|\nabla_0 u|^2} \right)X^0_iX^0_jv & \mbox{ in } & \Omega \subset \mathbb{R}^2\times S^1  \\
              v(\cdot ,0)=v_0. & &
             \end{array}\right.
           \end{equation}
From now on in order to simplify notations we will denote: 
\begin{equation}\label{aij}A_{ij}^{0}(\nabla_0 u) =   \delta_{i,j} - \frac{X^0_iuX^0_ju}{|\nabla_0 u|^2} , \quad i,j=1, 2.\end{equation}
Let us note that the described equations become degenerate and the solutions are regular only along the directions of the foliation.

\subsection{Enhancement and Inpainting in Sub-Riemannian geometry} \label{section algorithm}

\subsubsection{Inpainting of missing parts of the image} 

In the previous section we described an algorithm proposed in \cite{A4} for restoring damaged portions of an image, where the corrupted set $\omega$ is known a priori.  

\begin{itemize}
\item[1]{ An image $I(x,y)$ are lifted to a surface $S = \{(x, y, \bar \theta(x,y)\}$ in the Lie group $SE(2)$ of rotation and translation, and the gray level of the image $I(x,y)$ to a function $v(x,y,\bar\theta(x,y))$ defined on $S$. In the lifting the corrupted part of the image becomes $\Omega= \omega\times S^1$, where no surface is defined.}
\item[2]{The surface  $S$ and $v(x,y,\bar\theta(x,y))$ are processed via the algorithm of diffusion and concentration in the corrupted region $\Omega$, where we impose Dirichlet boundary conditions. This leads to the motion by mean curvature of the surface $S$ and to a Laplace Beltrami flow for $v(x,y,\bar\theta(x,y))$.} 
\item[3]{The final result is obtained by re-projecting onto the plane of the image the value of the intensity $v(x,y,\bar\theta(x,y))$.}
\end{itemize}
The algorithm has been implemented in \cite{GCS} via a diffusion and concentration method, 
while it has been implemented via the curvature equation in \cite{A4}.

\subsubsection{Enhancement of boundaries} 

One of the scope of this paper is to extend the previous completion algorithm to solve the problem of contours enhancement. The aim of this technique is to provide a regularization in the direction of the boundaries, making them clearer and brighter and eliminating noise.  We refer to the paper of Duits  \cite{DFI},\cite{DFII} for some results of image enhancement in this space. Precisely he lifts the image $I$ on the 3D features space, using an invertible map defined through 
Fourier analysis. The lifted version of the image $I$ is processed in the 3D  space 
and then reprojected on the 2D plane to recover an enhanced version of the image $I$. In particular he also provides results of enhancement in presence of bifurcation or crossing. 
In this paper we face the same problem adapting the algorithm recalled in the previous section. 
\begin{itemize}
\item[1]{First we lift the level lines of an image $I(x,y)$ to a surface $S = \{(x, y, \bar \theta(x,y))\}$  and we lift the gray levels of $I(x,y)$ to a function $v(x, y, \bar \theta(x,y))$ always defined on $S$. } 
\item[2]{Then we process the surface $S$ via a mean curvature flow and $v$ via a Laplace-Beltrami flow. In order to perform enhancement we propose here to let equations (\ref{Smcf}) and (\ref{Smcl}) evolve in the full domain $\R^2 \times S^1$. Let us remark that lifting the image in the 3D group allows to solve the problem of crossing elongated structures.
Indeed if two lines cross in the 2D space and have different orientations, they are lifted to the 3D space to two different planes, allowing completion and enhancement. The directional diffusion will give place to a regularization only in the direction of contours.} 
\item[3]{Finally we project into the plane of the image the value of the gray intensity $v(x,y,\theta(x,y))$.}
\end{itemize}

\section{Existence of viscosity solutions}\label{sec3}

In this section we provide the main result of this paper which is the proof of existence of viscosity solutions for the mean curvature flow in $SE(2)$. 
We explicitly note that we do not need to develop new results for the Laplace-Beltrami operator, which is linear.

As we can immediately observe the PDE becomes degenerate in the singularities of the horizontal gradient of the solution $u(\ldotp,t)$.
Furthermore, just as in the Euclidean space, we cannot expect the smoothness of the solution to be preserved for all times. 
The notion of viscosity solution has been introduced in order to overcome these type of problems. 
The method of generalized (viscosity) solutions independently developed by Chen, by Giga and Goto \cite{Visc[4]}, by Evans and Spruck
\cite{A6}, by Crandall, Ishii and Lions \cite{Visc[8]} is now applied to large 
classes of degenerate equations \cite{Visc[19]}. Recently it has been extended also in the sub-Riemannian setting, see \cite{W2} \cite{A3} \cite{A26}. 
Finally let us recall that Dirr et al. \cite{A25} have recently studied a probabilistic approach to the mean curvature flow in the context of the Heisenberg group.

\subsection{The notion of viscosity solution}
\subsubsection{Viscosity solutions in Jet spaces}
The cortical model previously discussed, associates to each 2D curve $\gamma_{2D}$ its orientation. This procedure can be considered as a lifting of the initial image $I(x,y)$ to a 
new function $u$ in the Jet-space $\mathbb{R}^2 \times \mathit{S}^1$ of position and orientation. We refer to Petitot and Tondut, who first described the analogous cortical process as a lifting in a jet space \cite{A21}.
In this section, we will sometimes denote $\xi =(x,y,\theta)$ as an element of $\mathbb{R}^2 \times \mathit{S}^1$. 
It is then natural to lift the function $u$ into another Jet-space which contains the formal analogous  of its sub-Riemannian gradient $\nabla_0u$  and the formal analogous  of its 
second derivatives $X^0_iX^0_j$. The definition of viscosity solution in Jet-spaces is based on the Taylor expansion, expressed in terms of these differential objects. Since the analogous of the increment in the direction of the gradient $p$ is expressed through the exponential map, then the increment from a point $\xi$ in the direction $\sum_{i=1}^2\eta_i X^0_i$ is expressed as
$$u\Big(\exp(\sum_{i=1}^2\eta_i X^0_i)(\xi),t+s\Big) - u(\xi,t).$$
At non regular points, such as kinks,  there is not either a unique vector $p$ which identifies the horizontal gradient and a unique matrix $r_{ij}$ which identifies the horizontal Hessian. Hence we need to give a more general notion: a couple $(p,r)$ where  $p_i, i=1,2$ denotes an horizontal vector and $(r_{ij})$ a $2 \times 2$ matrix is a superjet $\mathcal{J}^+$ for $u$ if it satisfies the following formal analogous of the Taylor development:
\begin{equation}
u\Big(exp(\sum_{i=1}^2\eta_i X_i)(\xi),t+s\Big)- u(\xi,t) 
\leq \sum_{i=1}^{2}p_i\eta_i + \frac12 \sum_{i,j=1}^2r_{ij}\eta_i\eta_j + qs + o(|\eta|^2+s^2).
\end{equation}
Let us note that if the superject exists it can be used in place of the derivative; furthermore a function $u$ is a Jet-space viscosity solution if the differential equation in which the derivatives are replaced with the elements of the superjet is satisfied. More precisely: 
\begin{definition}
A function $u \in \mathit{C}(\mathbb{R}^2 \times \mathit{S}^1 \times [0,\infty)) \cap \mathcal{L}^{\infty}(\mathbb{R}^2 \times \mathit{S}^1 \times [0,\infty))$ is a jet space-viscosity subsolution of equation (\ref{Smcf}) if for every $(p_H, r_{ij})$ in the super-Jet we have:

\begin{equation}
q \leq 
\left\{\}\begin{array}{ll}
\sum_{i,j=1}^{2} A^0_{ij}(p_H) r_{ij} & \mbox{ if }|p_H| \neq 0 \\
\sum_{i,j=1}^{2} A^0_{ij}(\tilde p) r_{ij} & \mbox{ for some }|\tilde p|\leq 1, \mbox{if } |p_H|=0. 
\end{array}\right.
\end{equation}
\end{definition}
An analogous definition is provided for a viscosity supersolution. Then a viscosity solution is a function which is both a subsolution and a supersolution.

\subsubsection{Viscosity solutions via test functions}
The definition of viscosity solution in Jet-space of a second order equation can be identified as the approximation of the solution $u$ via a second order polynomial, whose coefficients are exactly the elements $(p_i, r_{ij})$ of the Jet space. More generally we assume that the function $u$ is locally approximate by a smooth test function $\phi$. This definition imposes the behavior of the function $u$ at points where $u-\phi$ attains a maximum. At such points $u$ and $\phi$ will have the same first derivatives, so that $\nabla_0 \phi$ results to be an exact evaluation of the approximation of $\nabla_0 u$. Looking at second derivatives, it follows that for every $i$ we have: $$X_i X_i(u-\phi)\leq 0,$$
so that the curvature of $\phi$ is an upper bound for the curvature of $u$. Due to this observations we can give the following definition:

\begin{definition}
A function $u \in \mathit{C}(\mathbb{R}^2 \times \mathit{S}^1 \times [0,\infty))$ is a  viscosity subsolution of (\ref{Smcf}) in $\mathbb{R}^2 \times \mathit{S}^1 \times [0,\infty)$ if for any $(\xi,t)$ in $\mathbb{R}^2 \times \mathit{S}^1 \times [0,\infty)$ and any function $\phi \in \mathit{C}(\mathbb{R}^2 \times \mathit{S}^1 \times [0,\infty))$ such that $u- \phi$ has a local maximum at $(\xi,t)$ it satisfies:
\begin{equation}
\partial_t \phi \leq 
\left\{\begin{array}{ll} 
\sum_{i,j=1}^{2}A^0_{ij}(\nabla_0 \phi)X_iX_j \phi \mbox{,}& \mbox{ if }  |\nabla_0 \phi| \neq 0 \\
\sum_{i,j=1}^{2}A^0_{ij}(\tilde p)X_iX_j\phi , & \mbox{ for some }\tilde  p \in \mathbb{R}^2 \mbox{, } |\tilde p| \neq 1 \mbox{, if} |\nabla_0 \phi| = 0  
\end{array}\right.	  			         
\end{equation}
A function $u \in \mathit{C}(\mathbb{R}^2 \times \mathit{S}^1 \times [0,\infty))$ is a viscosity supersolution of (\ref{Smcf}) if:
\begin{equation}
\partial_t \phi \geq 
\left\{\begin{array}{ll}  \sum_{i,j=1}^{2}A^0_{ij}(\nabla_0 \phi)X_iX_j \phi & \mbox{if \,\,}  |\nabla_0 \phi| \neq 0 \\
			  \sum_{i,j=1}^{2}A^0_{ij}(\tilde p)X_iX_j\phi & \mbox{for some\,\,} \tilde p \in \mathbb{R}^2 \mbox{,\,} |\tilde p| \neq 1 \mbox{, if\,} |\nabla_0 \phi| = 0  \\
\end{array}\right.			         
\end{equation}
\end{definition}
\begin{definition}
A \textit{viscosity solution} of (\ref{Smcf}) is a function $u$ which is both a viscosity subsolution and a viscosity supersolution.
\end{definition}
\begin{theorem}
The two definitions of jet spaces viscosity solution and viscosity solution are equivalent.
\end{theorem}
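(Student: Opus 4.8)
The strategy is the standard one adapted from the Euclidean theory (Crandall–Ishii–Lions, Evans–Spruck), but carried out in the jet-space language attached to the sub-Riemannian exponential map of $SE(2)$. The proof splits into two implications, each established for subsolutions (the supersolution case being symmetric). First I would show that a test-function subsolution is a jet-space subsolution: given $(p_H, r_{ij}) \in \mathcal{J}^+u(\xi,t)$ with the second-order Taylor inequality along the curves $s \mapsto \exp\big(\sum_i \eta_i X_i\big)(\xi)$, I would construct an explicit smooth test function $\phi$ whose sub-Riemannian first derivatives $X_i\phi(\xi,t)$ equal $p_i$, whose "horizontal Hessian" $X_iX_j\phi(\xi,t)$ equals $r_{ij}$, and with $\partial_t\phi(\xi,t)=q$, such that $u-\phi$ attains a local maximum at $(\xi,t)$. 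The natural candidate is a second-order polynomial in the exponential coordinates $\eta$ centered at $\xi$, namely $\phi(\exp(\sum\eta_iX_i)(\xi),t+s) = u(\xi,t) + \sum_i p_i\eta_i + \tfrac12\sum_{i,j}r_{ij}\eta_i\eta_j + qs + \varepsilon(|\eta|^2+s^2)$ for small $\varepsilon>0$, smoothed away from $\xi$. The superjet inequality forces $u-\phi \le -\varepsilon(|\eta|^2+s^2) \le 0$ near $(\xi,t)$, giving the local max; then the definition of test-function subsolution yields $q = \partial_t\phi(\xi,t) \le \sum_{ij}A^0_{ij}(\nabla_0\phi)X_iX_j\phi = \sum_{ij}A^0_{ij}(p_H)r_{ij}$ when $|p_H|\neq 0$, and the analogous inequality with $\tilde p$ when $|p_H|=0$ (here one must check that the $\tilde p$ produced by the test-function definition can be fed into the jet definition, which is immediate since the two conditions have the same form). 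Letting $\varepsilon \to 0$ closes this direction.

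For the converse — a jet-space subsolution is a test-function subsolution — I would take $\phi\in C^\infty$ with $u-\phi$ having a local maximum at $(\xi,t)$ and read off a superjet from the Taylor expansion of $\phi$. The key point is that since $u-\phi$ has a local max, $u(\exp(\sum\eta_iX_i)(\xi),t+s) - u(\xi,t) \le \phi(\exp(\sum\eta_iX_i)(\xi),t+s) - \phi(\xi,t)$, and the right-hand side, being smooth, has a genuine second-order Taylor expansion along the exponential curves whose coefficients are $p_i = X_i\phi(\xi,t)$, $r_{ij} = X_iX_j\phi(\xi,t)$ (symmetrized), $q = \partial_t\phi(\xi,t)$. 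Hence $(p_H,r) \in \mathcal{J}^+u(\xi,t)$, and applying the jet-space subsolution inequality gives exactly $\partial_t\phi \le \sum_{ij}A^0_{ij}(\nabla_0\phi)X_iX_j\phi$ (or the $|\nabla_0\phi|=0$ alternative). The only subtlety is that $X_iX_j\phi$ need not be symmetric in $i,j$ because $X_1$ and $X_2$ do not commute — indeed $[X_1,X_2]=X_3\neq 0$; but since $A^0_{ij}$ is symmetric, $\sum_{ij}A^0_{ij}X_iX_j\phi = \sum_{ij}A^0_{ij}\cdot\tfrac12(X_iX_j+X_jX_i)\phi$, so it is the symmetric part that enters, and the symmetrized matrix is what one records in the jet. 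This is exactly the reason the jet definition and the equation itself are insensitive to the ordering.

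The main obstacle, and the place requiring genuine care, is the non-commutativity of the frame $X_1,X_2$ and its interaction with the exponential map used to define increments. In the Euclidean (or Heisenberg, with its graded structure) setting the correspondence between "polynomial test function" and "superjet" is transparent because the relevant Taylor expansion is literally a polynomial in the exponential coordinates; in $SE(2)$ one must verify that the second-order Taylor expansion of a smooth function along the curves $s\mapsto\exp(sX_i)(\xi)$, and more generally along $\exp(\sum\eta_iX_i)(\xi)$, produces the coefficients $X_i\phi$ and the symmetrized $X_iX_j\phi$ with an $o(|\eta|^2)$ remainder that is uniform — this is where one uses that the $X_i$ are smooth vector fields and that the exponential map is a local diffeomorphism. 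I would isolate this as a preliminary computational lemma (expansion of $\phi\circ\exp$ to second order, with the Baker–Campbell–Hausdorff correction contributing only to the $X_3$-direction and hence only at order that does not affect the horizontal second-order part), after which both implications above go through as in the classical proof. The remaining verifications — matching of the degenerate-gradient clauses, passage to limits in $\varepsilon$ — are routine.
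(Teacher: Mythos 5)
Your second implication (jet-space subsolution $\Rightarrow$ test-function subsolution) is sound: for smooth $\phi$ with $u-\phi$ maximal at $(\xi,t)$, the expansion
$\phi\big(\exp(\sum_{i=1}^2\eta_iX_i)(\xi),t+s\big)=\phi(\xi,t)+\sum_i\eta_iX_i\phi+\tfrac12\sum_{i,j}\eta_i\eta_jX_iX_j\phi+s\,\partial_t\phi+o(|\eta|^2+s^2)$
produces a superjet element, and your remark that only the symmetrized horizontal Hessian enters, because $A^0_{ij}$ is symmetric, is exactly the right observation (the non-commutativity of $X_1,X_2$ is harmless here). Note, incidentally, that the paper states the theorem without any proof, so there is no argument of the authors to compare with; your sketch has to stand on its own.

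The genuine gap is in your first implication, where you realize a superjet element by a test function. The map $\eta\mapsto\exp(\eta_1X_1+\eta_2X_2)(\xi)$, $\eta\in\R^2$, is \emph{not} a local diffeomorphism onto a neighbourhood of $\xi$ (only the full three-dimensional exponential is): since only two directions are exponentiated, it parametrizes a two-dimensional surface through $\xi$ inside $\RRS$. Consequently your prescription $\phi(\exp(\sum_i\eta_iX_i)(\xi),t+s)=u(\xi,t)+\sum_i p_i\eta_i+\tfrac12\sum_{i,j}r_{ij}\eta_i\eta_j+qs+\varepsilon(|\eta|^2+s^2)$ defines $\phi$ only on that surface, and the inequality $u-\phi\leq-\varepsilon(|\eta|^2+s^2)+o(|\eta|^2+s^2)\leq 0$ you extract from the superjet condition holds only there; the test-function definition, however, requires $u-\phi$ to have a local maximum in a full neighbourhood of $(\xi,t)$. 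The superjet inequality of the paper carries no information at all about the behaviour of $u$ in the $X_3$ direction, and $u$ is merely continuous (at best Lipschitz), so no smooth extension of your $\phi$ off the surface can be guaranteed to dominate $u$ there: already a linear growth of $u$ along $X_3$ defeats any extension whose data are dictated by $(p,r,q)$ plus quadratic penalties. This is precisely the known difficulty in the sub-Riemannian theory: the equivalence proofs in the literature (Bieske for the Heisenberg group, Wang, Capogna--Citti) work with jets that also record a first-order coefficient in the vertical direction and measure the remainder in the homogeneous gauge, where the vertical increment counts with degree two, and the realization of such a jet by a touching test function is a separate, non-trivial lemma rather than a direct polynomial construction. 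With the purely horizontal jets as written, your argument closes only one of the two implications; you either need to enrich the jet (and then redo the construction with the vertical term), or replace the polynomial construction by a sup-convolution-type argument. The remaining issues you flag (the BCH expansion lemma, the extraction of a convergent $\tilde p_\varepsilon$ as $\varepsilon\to0$ in the degenerate case) are indeed routine.
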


\subsubsection{Vanishing viscosity solutions}
A vanishing viscosity solution is the limit of the solutions of approximating regular problems. 

Let us first explicitly note that the coefficients $A_{ij}$ are degenerate: when the gradient vanishes, they are not defined. Hence we will apply the regularization procedure proposed by Evans and Spruck in \cite{A6} to face singularities, which consists in replacing the coefficients with the following ones:
$$A_{ij}^{\tau}(p)= \left(\delta_{ij} -\frac{p_ip_j}{|p|^2+\tau} \right).$$
This approximation has a clear geometric interpretation, already provided by Evans and Spruck. In equation (\ref{Smcf}) each level set of $u$ evolves by mean curvature. What we obtain adding a new parameter is the evolution of the graph of $u$ 
$$\Gamma_t^{\tau}=\{ (\xi,\xi_{n+1}) \in \mathbb{R}^{n+1}|\xi_{n+1}=u(\xi, t) \}$$
and the introduction in the space of a metric depending on $\tau$.
In this approximation equation (\ref{Smcf}) reads as: 
\begin{equation}\label{mcfgraph}
\left\{\begin{array}{lll}
u_t = \sum\limits_{i,j=1}^{2} A^\tau_{ij}(\nabla_0 u) X_iX_ju & \mbox{ in } & \Omega \subset \mathbb{R}^2\times S^1 \\
u(\cdot ,0)=u_0. & &
\end{array}\right.
\end{equation}

We will now introduce a Riemannian approximation of the mean curvature flow in the graph approximation we made before.
We extend $g_0$ on the whole space $SE(2)$ to a metric $g_\epsilon $ which makes the vectors $X_1$, $X_2$, $\epsilon X_3$ orthonormal. Let us note that $g_\epsilon $ is the Riemannian completion of the horizontal metric. From now on, in order to simplify notations we will always denote  
\begin{equation}\label{campie}X_1^\epsilon = X_1, \quad X_2^\epsilon = X_2, \quad X_3^\epsilon = \epsilon X_3.\end{equation} 
The Riemannian gradient associated to the metric $g_\epsilon$ will be represented as:
$$\nabla_\epsilon u = X_1^\epsilon u X_1^\epsilon + X_2^\epsilon u X^\epsilon_2 + X_3^\epsilon u X^\epsilon_3 $$
and, using the fact that $X^\epsilon$  are orthonormal, we get:
\begin{equation}
\lvert \nabla _\epsilon u\rvert = \sqrt{(X_1u)^2 + (X_2u)^2 + \epsilon^2 (X_3 u)^2}.
\end{equation}
In the Riemannian setting equation (\ref{mcfgraph}) reads as: 
\begin{equation}\label{mcf}
\left\{\begin{array}{lll}
u_t = \sum\limits_{i,j=1}^{3} A^{\epsilon, \tau}_{ij}(\nabla_\epsilon u) X^\epsilon_iX^\epsilon_ju & \text{ in } & \Omega \subset \mathbb{R}^2\times S^1 \\
u(\cdot ,0)=u_0 & &
\end{array}\right.
\end{equation}
where 
$$A^{\epsilon, \tau}_{ij}(\nabla_\epsilon u)=\left( \delta_{i,j} - \frac{X^\epsilon_iuX^\epsilon_ju}{|\nabla_\epsilon u|^2+ \tau } \right).$$

In order to prove the existence of a solution we apply another regularization, always introduced by Evans and Spruck. It consists in adding a Laplacian, ensuring that the matrix of the coefficients have strictly positive smallest eigenvalue. Then the approximated coefficients will be: 
$$A_{ij}^{\epsilon,\tau,\sigma}(p)=A_{ij}^{\epsilon,\tau}(p)+\sigma\delta_{ij}$$
and the associated equation becomes:
\begin{equation}\label{mcfest}
\left\{\begin{array}{lll}
u_t = \sum\limits_{i,j=1}^{3} A_{ij}^{\epsilon,\tau,\sigma}(\nabla^\epsilon u)  X^\epsilon_iX^\epsilon_ju & \text{ in } & \Omega \subset \mathbb{R}^2\times S^1 \\
u(\cdot ,0)=u_0. & &
\end{array}\right.
\end{equation}
This condition makes the coefficients satisfy the coercivity condition and allows to apply the standard theory of uniformly parabolic equations.

We are now in condition to give a third definition of vanishing viscosity solution, 

\begin{definition} 
A function $u$ is a vanishing viscosity solution of (\ref{Smcf}) if it is limit of a sequence of solutions 
$u^{\epsilon_k,\tau_k,\sigma_k}$ of equation (\ref{mcfest}).
\end{definition}

We will see that any Lipschitz continuous vanishing viscosity solution is a viscosity solution of the same equation 
(see Theorem \ref{vivi} below). 

\subsection{Solution of the approximating equations}
The aim of this sub-section is to  prove the existence of solutions for the approximating equation (\ref{mcfest}) and, even more important,  to establish estimates independent of all parameters, which hold true also the limit equation (\ref{Smcf}). 
We will study solutions on the whole space, since this is the assumption needed for the enhancing algorithm.
\begin{theorem} 
\label{First part of existence} 
Assume that $u_0 \in \mathit{C}^{\infty}(\mathbb{R}^2 \times \mathit{S}^1)$ and that it is constant on the exterior of a cylinder, i.e. there exists $S>0$ such that: 
\begin{equation}\label{constantS}
u_0 \mbox{ is constant on \quad} \mathbb{R}^2 \times \mathit{S}^1   \cap \{x^2 + y^2  \geq S\}.
\end{equation}
Then there exists a unique solution $u^{\epsilon,\tau, \sigma} \in \mathit{C}^{2,\alpha}(\mathbb{R}^2 \times \mathit{S}^1 \times [0,\infty))$ of the initial value problem (\ref{mcfest}). 
Moreover, for all $t>0$ one has: 
\begin{equation}
\lVert u^{\epsilon, \tau, \sigma}(\cdot, t) \rVert_{\mathcal{L}^{\infty}(\mathbb{R}^2 \times \mathit{S}^1)}
\leq \lVert u_0 \rVert_{\mathcal{L}^{\infty}(\mathbb{R}^2 \times \mathit{S}^1)} \label{stima0}
\end{equation}
\begin{equation} \label{stima1}
\lVert  {\nabla}_{E}u^{\epsilon,\tau, \sigma}(\cdot, t)\rVert_{\mathcal{L}^{\infty}(\mathbb{R}^2 \times \mathit{S}^1)}
\leq \lVert  {\nabla}_{E} u_0 \rVert_{\mathcal{L}^{\infty}(\mathbb{R}^2 \times \mathit{S}^1)}. \end{equation}
where $\nabla_{E}(\cdot)$ denotes the Euclidean gradient. \end{theorem}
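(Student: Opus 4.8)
\emph{Proof idea.} I would follow the Evans--Spruck scheme \cite{A6}, transported to the left‑invariant frame on $SE(2)$. For fixed $\epsilon,\tau,\sigma$ the matrix $A^{\epsilon,\tau,\sigma}_{ij}$ has eigenvalues in $[\sigma,1+\sigma]$, so (\ref{mcfest}) is a quasilinear \emph{uniformly} parabolic equation whose coefficients are smooth, depend on $(x,y,\theta)$ only through $\cos\theta,\sin\theta$, and are bounded together with their gradient‑derivatives. Since $\mathbb{R}^2\times S^1$ is noncompact in $(x,y)$, I would first solve the problem on the solid torus $B_R\times S^1$ with initial datum $u_0$ and lateral Dirichlet datum equal to the constant value of $u_0$ from (\ref{constantS}); the classical theory of quasilinear uniformly parabolic equations gives a unique solution $u_R$, of class $C^{2,\alpha}$ and global in time once one has uniform $C^0$ and $C^1$ bounds — precisely the estimates (\ref{stima0})--(\ref{stima1}), which do not see $R$. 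Interior $C^{2,\alpha}$ estimates then make $\{u_R\}$ precompact, and a diagonal argument produces a solution on all of $\mathbb{R}^2\times S^1$; uniqueness and independence of the exhaustion follow from the comparison principle for uniformly parabolic equations, valid here because competing solutions share the value of $u_0$ at spatial infinity.

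The bound (\ref{stima0}) is immediate: the right‑hand side of (\ref{mcfest}) vanishes for $u$ constant, so the constants $\pm\|u_0\|_{\mathcal L^\infty}$ lie above and below $u_R$ on the parabolic boundary of $B_R\times S^1$, and comparison gives $\|u_R(\cdot,t)\|_{\mathcal L^\infty}\le\|u_0\|_{\mathcal L^\infty}$; then let $R\to\infty$.

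The gradient estimate (\ref{stima1}) is the crux, and the obstacle is the one flagged in the introduction: differentiating (\ref{mcfest}) along $\partial_x,\partial_y,\partial_\theta$, or along the $X^\epsilon_i$ themselves, produces \emph{second}‑order commutator terms the maximum principle cannot absorb. The remedy is to differentiate along a frame commuting with the natural one. Since $X_1,X_2,X_3$ are left‑invariant on $SE(2)$, I would use the right‑invariant frame $Y_1,Y_2,Y_3$, for which $[Y_k,X^\epsilon_i]=0$ for all $i,k$. Writing $v_k:=Y_ku$ and applying $Y_k$ to (\ref{mcfest}), commutativity turns $Y_k(X^\epsilon_iX^\epsilon_j u)$ into $X^\epsilon_iX^\epsilon_j v_k$ and $Y_k\big(A^{\epsilon,\tau,\sigma}_{ij}(\nabla_\epsilon u)\big)$ into $\sum_l\partial_{p_l}A^{\epsilon,\tau,\sigma}_{ij}(\nabla_\epsilon u)\,X^\epsilon_l v_k$, so that
\[
\partial_t v_k=\sum_{i,j}A^{\epsilon,\tau,\sigma}_{ij}(\nabla_\epsilon u)\,X^\epsilon_iX^\epsilon_j v_k+\sum_{l}B_l\,X^\epsilon_l v_k,\qquad B_l:=\sum_{i,j}\partial_{p_l}A^{\epsilon,\tau,\sigma}_{ij}(\nabla_\epsilon u)\,X^\epsilon_iX^\epsilon_j u,
\]
a \emph{linear} nondivergence parabolic equation with the same principal part, bounded lower‑order coefficient $B_l$ independent of $k$, and \emph{no} zeroth‑order term. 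Then $w:=\sum_k v_k^2$ satisfies $\partial_t w\le\sum_{i,j}A^{\epsilon,\tau,\sigma}_{ij}X^\epsilon_iX^\epsilon_j w+\sum_l B_l X^\epsilon_l w$, the term $-2\sum_{i,j}A^{\epsilon,\tau,\sigma}_{ij}\sum_k(X^\epsilon_iv_k)(X^\epsilon_jv_k)\le 0$ being discarded by positivity of $A^{\epsilon,\tau,\sigma}$. The maximum principle on $B_R\times S^1$ (the constant lateral datum forbids a boundary maximum of $w$ beyond its initial one) gives $\sup w(\cdot,t)\le\sup w(\cdot,0)$ uniformly in all parameters, and passing to $|\nabla_E u|^2=(X_1u)^2+(X_2u)^2+(X_3u)^2$ — an intrinsic quantity, the $X_i$ being a pointwise Euclidean‑orthonormal frame — via the (locally bounded, invertible) change of frame, together with the control on the set where $u_0$ is nonconstant coming from (\ref{constantS}), yields (\ref{stima1}). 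I expect this last reconciliation between the commuting frame and the Euclidean gradient, and the verification that the boundary terms in the $R\to\infty$ exhaustion are harmless, to be the only genuinely delicate points; the rest is the standard Evans--Spruck argument adapted to the left‑invariant vector fields.
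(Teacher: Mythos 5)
Your overall strategy is the same as the paper's: exhaustion of $\R^2\times S^1$ by bounded cylinders with constant lateral data and classical uniformly parabolic theory for existence, comparison with constants for (\ref{stima0}), and differentiation of (\ref{mcfest}) along the right-invariant frame $Y_1,Y_2,Y_3$ of Lemma \ref{Vector fields commute} for the gradient bound (whether one runs the maximum principle on $\sum_k (Y_k u)^2$ or on each $Y_k u$ separately, as the paper does, is immaterial).

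The genuine gap is precisely the step you defer to at the end: recovering $\nabla_E u$ from $(Y_1u,Y_2u,Y_3u)$. The change of frame is not harmlessly ``locally bounded'': since $Y_2=\partial_\theta-y\partial_x+x\partial_y$, one has $\partial_\theta u=Y_2u+y\,Y_1u-x\,Y_3u$, so your estimate $\sup_t\sum_k(Y_ku)^2\le\sup\sum_k(Y_ku_0)^2$ only yields $|\partial_\theta u(x,y,\theta,t)|\le (1+|x|+|y|)\,C(S)\,\lVert\nabla_E u_0\rVert_\infty$, a bound that degenerates as $|x|+|y|\to\infty$ and therefore does not give (\ref{stima1}). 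The hypothesis (\ref{constantS}) controls the initial datum (it makes $\sup|Y_2u_0|$ finite), but it cannot rescue the evaluation side at positive times, because the $\sigma$-regularization has infinite propagation speed, so you cannot claim $Y_1u,Y_3u$ vanish outside $\{x^2+y^2\le S\}$ for $t>0$. The paper's proof of Theorem \ref{stimagrad} contains the extra device you are missing: for each fixed base point $(x_0,y_0)$ it applies the maximum principle not only to $Y_iu$ but also to the recentered quantity $\omega_4=Y_2u-(y_0\partial_x-x_0\partial_y)u$, i.e.\ to a \emph{constant-coefficient} combination of the right-invariant fields; this still commutes with the $X^\epsilon_i$, still satisfies a linear parabolic equation with no zeroth-order term, and at the base point $(x_0,y_0)$ reduces to $\partial_\theta u$, so the unbounded coefficients are shifted entirely onto the initial datum, where (\ref{constantS}) controls them. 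Without this (or an equivalent) recentering your argument bounds $\partial_x u$ and $\partial_y u$ but not $\partial_\theta u$. A secondary caveat: on $B_R\times S^1$ the constant lateral datum does not by itself exclude a lateral maximum of $w=\sum_k(Y_ku)^2$ — only tangential derivatives vanish there, the normal derivative need not — which is why the paper first passes to the whole-space solution and only then differentiates the equation.
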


Since the previous estimates do not depend on $\sigma$ and $\epsilon$, they will be stable 
when these parameters go to $0$: 
\begin{corollary}
The solution $u^\tau$ of the equation (\ref{mcfgraph}) satisfies conditions (\ref{stima0}) and (\ref{stima1}). 
\end{corollary}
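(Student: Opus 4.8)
The plan is to realize $u^\tau$ as a limit of the family $u^{\epsilon,\tau,\sigma}$ constructed in Theorem~\ref{First part of existence}, keeping $\tau>0$ fixed and letting $\sigma\to 0$ and $\epsilon\to 0$ (jointly, or one after the other). Since the estimates (\ref{stima0}) and (\ref{stima1}) hold with constants that do not depend on $\sigma$ and $\epsilon$, they will be inherited by any limit along which convergence is at least pointwise (for (\ref{stima0})) and controls the distributional gradient (for (\ref{stima1})). Hence the two things that actually require work are: (i) extracting a convergent subsequence $u^{\epsilon_k,\tau,\sigma_k}$, and (ii) checking that its limit solves (\ref{mcfgraph}).

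For (i) I would use Arzel\`a--Ascoli. Uniform boundedness and spatial equicontinuity of $u^{\epsilon,\tau,\sigma}(\cdot,t)$ are precisely (\ref{stima0}) and (\ref{stima1}); the missing ingredient is a modulus of continuity in $t$ that does not degenerate as $\sigma,\epsilon\to 0$, and this is the main obstacle, since the interior parabolic regularity available for fixed $\sigma>0$ blows up in that limit. I would obtain it by a barrier/comparison argument for the uniformly parabolic equation (\ref{mcfest}): because $u_0\in C^\infty$ is constant outside a cylinder and the coefficients $A^{\epsilon,\tau,\sigma}_{ij}$ have operator norm $\le 1+\sigma\le 2$ for $\sigma\le 1$, the functions $u_0\pm Ct$ are, for a constant $C$ depending only on $\|u_0\|_{C^2}$ and $S$ and \emph{not} on $\epsilon,\tau,\sigma$, a super- and a subsolution of (\ref{mcfest}); comparison gives $\|u^{\epsilon,\tau,\sigma}(\cdot,t)-u_0\|_{\mathcal L^\infty}\le Ct$, and then the invariance of (\ref{mcfest}) under time translations and under addition of constants (which only shifts the graph vertically), together with the comparison principle, upgrades this to $\|u^{\epsilon,\tau,\sigma}(\cdot,t+h)-u^{\epsilon,\tau,\sigma}(\cdot,t)\|_{\mathcal L^\infty}\le Ch$. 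A parallel comparison with barriers concentrated near the cylinder shows the $u^{\epsilon,\tau,\sigma}$ stay constant on $\{x^2+y^2\ge S'\}$ for an $S'$ independent of the parameters; combined with a diagonal argument over the sets $\overline{B_R}\times S^1\times[0,T]$, this yields a subsequence $u^{\epsilon_k,\tau,\sigma_k}$ converging uniformly on $\RRS\times[0,\infty)$ to a function $u^\tau$ which is Lipschitz in space--time.

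For (ii) I would invoke the stability of viscosity solutions. Each $u^{\epsilon,\tau,\sigma}\in C^{2,\alpha}$ is a classical, hence viscosity, solution of (\ref{mcfest}); testing against a smooth $\phi$, the term $\sigma\sum_{i,j}X^\epsilon_iX^\epsilon_j\phi$ and, writing $X^\epsilon_3=\epsilon X_3$, the contributions of the third direction ($\epsilon X_3\phi$ and $\epsilon^2 X_3X_3\phi$) tend to $0$ locally uniformly, while $A^{\epsilon,\tau}_{ij}(\nabla_\epsilon\phi)\to A^{\tau}_{ij}(\nabla_0\phi)$ locally uniformly; note that for $\tau>0$ the coefficients $A^{\tau}_{ij}(p)=\delta_{ij}-p_ip_j/(|p|^2+\tau)$ are continuous also at $p=0$, so no case distinction on the singular set is needed at this stage. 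The standard stability theorem for locally uniform limits of viscosity solutions of degenerate elliptic/parabolic equations then gives that $u^\tau$ is a viscosity solution of (\ref{mcfgraph}). Finally (\ref{stima0}) passes to the limit by the uniform convergence, and (\ref{stima1}) because $\nabla_E u^\tau$ is the distributional limit of the uniformly bounded $\nabla_E u^{\epsilon_k,\tau,\sigma_k}$; thus $u^\tau$ satisfies both bounds, and once a comparison principle for (\ref{mcfgraph}) is available the limit is independent of the subsequence, so it is \emph{the} solution $u^\tau$.
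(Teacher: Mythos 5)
Your argument is correct and follows essentially the same route as the paper: the paper's entire justification for this corollary is the remark that the bounds (\ref{stima0}) and (\ref{stima1}) are independent of $\sigma$ and $\epsilon$, so they persist when these parameters are sent to $0$ and $u^\tau$ is obtained as the corresponding limit of the $u^{\epsilon,\tau,\sigma}$. The extra work you supply --- the barrier/comparison argument giving a parameter-independent modulus of continuity in time for the Arzel\`a--Ascoli step, and the viscosity-stability argument identifying the limit as a solution of (\ref{mcfgraph}) --- is sound and fills in details the paper leaves implicit (its compactness argument in Theorem \ref{vanishvisc} likewise invokes Ascoli--Arzel\`a without an explicit time modulus).
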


\bigskip
This results generalizes to $SE(2)$ the previous results of \cite{A6} and \cite{A3}. 
The main difficult to face in the extension is the fact that the vector fields $X^\epsilon$ does not commute, hence it is not easy to find a nice equation satisfied by the gradient. Following the approach proposed by Mumford in \cite{K1}, we will take the derivatives along the direction of green a family of vector fields $\{Y_i\}_{i=1,\ldots,3}$, which are right invariant with respect to the group law. It is a general fact that these vector fields commute with the left invariant ones. We recall this result in the special case of our vector fields, for reader convenience:

\begin{lemma}\label{Vector fields commute}
 A right invariant basis of the tangent space can 
be defined as follow:
$$\begin{array}{l}
Y_1=\partial_x \\ 
Y_2=X_2 + (x\cos\theta+y\sin\theta)X_3+(x\sin\theta - y\cos\theta)X_1 = \partial_\theta - y \partial_x + x \partial_y\\ 
Y_3=\partial_y.\end{array}
$$
We will see that the vector fields $\{X^\epsilon_i\}_{i=1,2,3}$ defined in (\ref{campie}) commute with $\{Y_i\}_{i=1,2,3}$.
\end{lemma}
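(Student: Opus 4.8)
The plan is to exploit the group structure of $SE(2)$. First I would record the product law $(x,y,\theta)\cdot(x',y',\theta')=(x+x'\cos\theta-y'\sin\theta,\ y+x'\sin\theta+y'\cos\theta,\ \theta+\theta')$, with identity $e=(0,0,0)$, and check by differentiating the left translations $L_{(a,b,\phi)}$ that $X_1=\cos\theta\,\partial_x+\sin\theta\,\partial_y$ and $X_2=\partial_\theta$ are left invariant; then $X_3=[X_1,X_2]$ is automatically left invariant, being a bracket of left invariant fields. Dually, differentiating the right translations $R_{(a,b,\phi)}$ one sees that $Y_1=\partial_x$, $Y_3=\partial_y$ and $Y_2=\partial_\theta-y\,\partial_x+x\,\partial_y$ are right invariant, with values $\partial_x,\partial_y,\partial_\theta$ at $e$; since those span $T_e SE(2)$ and right translations are diffeomorphisms, $\{Y_1,Y_2,Y_3\}$ is a basis of $T_\xi SE(2)$ at every point $\xi$. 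The two expressions for $Y_2$ in the statement are reconciled by expanding $-y\,\partial_x+x\,\partial_y$ in the moving frame $\{X_1,X_3\}$: the coefficients $(x\sin\theta-y\cos\theta)$ and $(x\cos\theta+y\sin\theta)$ are read off from the elementary identities $(x\cos\theta+y\sin\theta)(-\sin\theta)+(x\sin\theta-y\cos\theta)\cos\theta=-y$ and its $\partial_y$-companion $(x\cos\theta+y\sin\theta)\cos\theta+(x\sin\theta-y\cos\theta)\sin\theta=x$.

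Once the $X_i$ are known to be left invariant and the $Y_j$ right invariant, the commutation $[X_i,Y_j]=0$ for all $i,j\in\{1,2,3\}$ is an instance of the general fact that the flow of a left invariant field is a right translation, the flow of a right invariant field is a left translation, and $L_g\circ R_h=R_h\circ L_g$, so the two flows commute and the Lie bracket vanishes. For a fully self-contained argument I would instead verify directly, in coordinates, only the six brackets $[X_1,Y_j]$ and $[X_2,Y_j]$, each a one-line computation (for instance $[X_1,Y_2]=(-\sin\theta\,\partial_x+\cos\theta\,\partial_y)-(-\sin\theta\,\partial_x+\cos\theta\,\partial_y)=0$), and then obtain $[X_3,Y_j]=[[X_1,X_2],Y_j]=[X_1,[X_2,Y_j]]-[X_2,[X_1,Y_j]]=0$ from the Jacobi identity with no further work.

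It remains to pass to the Riemannian frame (\ref{campie}): since $X_1^\epsilon=X_1$, $X_2^\epsilon=X_2$ and $X_3^\epsilon=\epsilon X_3$, bilinearity of the bracket over $\mathbb{R}$ gives $[X_3^\epsilon,Y_j]=\epsilon[X_3,Y_j]=0$, hence $[X_i^\epsilon,Y_j]=0$ for all $i,j$, which is exactly the assertion of the lemma together with the fact that $\{Y_j\}$ is a right invariant basis.

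The main (and essentially only) obstacle is the bookkeeping of conventions: one must fix the $SE(2)$ product law, i.e. decide through which factor the left translation acts, so that the $X_i$ come out left invariant and the $Y_j$ right invariant with exactly the coordinate formulas in the statement — the opposite convention simply interchanges the roles of the two frames. A minor secondary subtlety is that $X_3$ is presented as $[X_1,X_2]$ rather than as the right-extension of a chosen tangent vector at $e$; this is harmless, either because brackets of left invariant fields are again left invariant, or, in the coordinate route, because the Jacobi identity reduces $[X_3,Y_j]$ to the brackets already computed. Everything else is routine.
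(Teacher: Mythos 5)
Your proposal is correct, and it covers even a bit more than the paper does (the paper's proof only verifies commutation, while you also justify that $\{Y_j\}$ is a genuine right invariant global frame and reconcile the two expressions for $Y_2$). The route differs from the paper's: the paper proves the lemma by brute-force coordinate computation of the Lie brackets, writing out three representative pairs $[X_1,Y_1]$, $[X_2,Y_2]$, $[X_3,Y_3]$ and declaring ``the other combinations commute analogously,'' with no appeal to the group law beyond the remark preceding the lemma that left and right invariant fields commute in general. You instead make that general fact the engine of the proof --- left invariant flows are right translations, right invariant flows are left translations, $L_g\circ R_h=R_h\circ L_g$, hence vanishing brackets --- and, in your self-contained variant, you only compute the six brackets $[X_1,Y_j]$, $[X_2,Y_j]$ and obtain $[X_3,Y_j]=0$ from the Jacobi identity, then handle $X_3^\epsilon=\epsilon X_3$ by bilinearity. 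What your structural argument buys is robustness (no case-by-case checking, no reliance on ``analogously,'' and no sensitivity to the sign ambiguity in $X_3$, which the paper's own proof exhibits by using $\sin\theta\,\partial_x-\cos\theta\,\partial_y$ there versus $-\sin\theta\,\partial_x+\cos\theta\,\partial_y$ earlier); what the paper's computation buys is independence from fixing a group-law convention, which, as you correctly note, is the only bookkeeping hazard in your approach. Both are complete proofs of the statement.
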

\begin{proof}
We will calculate their Lie bracket:
\begin{eqnarray*}
[X_1,Y_1]&=&(\cos\theta \partial_x + \sin\theta \partial_y)\partial_x-\partial_x(\cos\theta \partial_x + \sin\theta \partial_y)\\ & = & \cos\theta \partial_{xx}+\sin\theta \partial_{yx} -\cos\theta \partial_{xx}-\sin\theta \partial_{xy} = 0. 
\end{eqnarray*}

Since the coefficients of $Y_2$ do not depend on $\theta$ it is clear that 
\begin{eqnarray*}
[X_2,Y_2]&=0.
\end{eqnarray*}
Finally 
\begin{eqnarray*}
[X_3,Y_3]&=&(\sin\theta \partial_x - \cos\theta \partial_y)\partial_y-\partial_y(\sin\theta \partial_x - \cos\theta \partial_y)\\&=& \sin\theta \partial_{xy}-\cos\theta \partial_{yy} -\sin\theta \partial_{xy}+\cos\theta \partial_{yy} = 0
\end{eqnarray*}
The other combinations commute analogously.
\end{proof}

The first step of the proof of Theorem \ref{First part of existence} is the existence of the function $u^{}$ and its $L^\infty$ bound: 

\begin{theorem} \label{linfti} Under the assumption of Theorem \ref{First part of existence} on the initial datum, the initial value problem (\ref{mcfest}) has an unique solution $u^{\epsilon,\tau, \sigma} \in \mathit{C}^{2,\alpha}(\mathbb{R}^2 \times \mathit{S}^1 \times [0,\infty))$  
such that 
\begin{equation}
\lVert u^{\epsilon, \tau, \sigma}(\cdot, t) \rVert_{\mathcal{L}^{\infty}(\mathbb{R}^2 \times \mathit{S}^1)}
\leq \lVert u_0 \rVert_{\mathcal{L}^{\infty}(\mathbb{R}^2 \times \mathit{S}^1)} 
\end{equation}
\end{theorem}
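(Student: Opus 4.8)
The plan is to produce the solution $u^{\epsilon,\tau,\sigma}$ by the classical theory of quasilinear uniformly parabolic equations and to read off the sup-norm bound from the parabolic comparison principle. The whole point of the regularization parameter $\sigma$ is that it makes this classical machinery applicable: the coefficient matrix $A^{\epsilon,\tau,\sigma}(p)$ has all of its eigenvalues in the fixed interval $[\sigma,1+\sigma]$, uniformly with respect to $p$, so the equation is non-degenerate (the bounds of course deteriorate as $\sigma\to0$, but that only matters for the limit equation, not here).

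The first step I would carry out is to rewrite (\ref{mcfest}) in Euclidean coordinates $\xi=(x,y,\theta)$. Expanding the left-invariant fields $X_1^\epsilon,X_2^\epsilon,X_3^\epsilon$ in $\partial_x,\partial_y,\partial_\theta$, the right-hand side becomes $\sum_{k,l}a_{kl}(\xi,\nabla u)\,\partial_{kl}u$ plus terms of order one in $u$; the latter come from differentiating the $\theta$-dependent coefficients of the $X_i^\epsilon$, i.e. ultimately from the fact that the frame does not commute, and they carry coefficients which are smooth functions of $\theta$ and of the bounded quantity $A^{\epsilon,\tau,\sigma}(\nabla_\epsilon u)$. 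Since $X_1^\epsilon,X_2^\epsilon,X_3^\epsilon$ form a global frame and $A^{\epsilon,\tau,\sigma}$ is coercive, the principal matrix $(a_{kl})$ is smooth in $(\xi,\nabla u)$ and uniformly positive definite. Thus (\ref{mcfest}) is a genuinely non-degenerate scalar quasilinear parabolic equation with smooth structure, and local-in-time existence and uniqueness of a solution in $C^{2,\alpha}$ follow from the standard linearization/fixed-point scheme, exactly as in \cite{A6}. Hypothesis (\ref{constantS}), that $u_0$ be constant outside a cylinder, provides the uniform control at spatial infinity required to run this on the non-compact manifold $\R^2\times S^1$: equivalently, one solves the Dirichlet problem on $\overline{B_R}\times S^1$ with the exterior value of $u_0$ as lateral datum and lets $R\to\infty$ using the interior estimates.

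The sup-norm bound (\ref{stima0}) is then just the maximum principle. The constant functions $w^{\pm}\equiv\pm\|u_0\|_{\mathcal{L}^\infty(\R^2\times S^1)}$ are solutions of (\ref{mcfest}) (for a constant, $\nabla_\epsilon w=0$ and $X_i^\epsilon X_j^\epsilon w=0$), they satisfy $w^-\le u_0\le w^+$ at $t=0$ and $w^-\le u^{\epsilon,\tau,\sigma}\le w^+$ on the lateral boundary, so the comparison principle for uniformly parabolic equations forces $w^-\le u^{\epsilon,\tau,\sigma}(\cdot,t)\le w^+$ throughout the interval of existence, which is (\ref{stima0}). Global existence on $[0,\infty)$ then follows from the usual continuation criterion: on any finite time interval on which $\|u(\cdot,t)\|_{\mathcal{L}^\infty}$ and $\|\nabla u(\cdot,t)\|_{\mathcal{L}^\infty}$ remain bounded, De Giorgi--Nash--Moser yields a uniform interior $C^\alpha$ bound and then parabolic Schauder theory a uniform $C^{2,\alpha}$ bound, so the solution cannot stop existing; the sup-norm is controlled by (\ref{stima0}) just established, and the gradient by the estimate (\ref{stima1}) of Theorem \ref{First part of existence}.

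I do not expect any serious obstacle here beyond bookkeeping: checking the structure conditions after the passage to Euclidean coordinates, and making the passage from the bounded cylinders to $\R^2\times S^1$ rigorous. The one genuinely delicate point — obtaining the gradient estimate (\ref{stima1}) in spite of $[X_i^\epsilon,X_j^\epsilon]\neq 0$, which is done by differentiating the equation along the right-invariant frame $\{Y_i\}$ of Lemma \ref{Vector fields commute} (these commute with the $X_i^\epsilon$, so the differentiated equation closes) — belongs to the proof of Theorem \ref{First part of existence} and enters here only as an input; the non-commutativity has no bearing on the $L^\infty$ estimate, where it only contributes harmless first-order terms.
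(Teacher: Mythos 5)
Your proposal follows essentially the same route as the paper: exploit the $\sigma$-coercivity of $A^{\epsilon,\tau,\sigma}$ to invoke the classical theory of uniformly parabolic quasilinear equations (Ladyzenskaja--Solonnikov--Ural'tseva) on bounded cylinders $B(0,r)\times[0,T]$ with constant lateral data coming from (\ref{constantS}), obtain the bound (\ref{stima0}) from the parabolic maximum/comparison principle, and let $r\to\infty$. Your additional remarks (the Euclidean rewriting with harmless first-order terms from the non-commuting frame, and the continuation argument, where the gradient bound should be read as an a priori estimate valid on the maximal existence interval to avoid any circularity with Theorem \ref{stimagrad}) are consistent elaborations of the same argument.
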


\begin{proof}
 For $\sigma>0$, consider the problem associated to equation (\ref{mcfest}) on a cylinder  
$B(0,r) \times [0,T]$, 
with initial data 
\begin{equation}
u_r^{\epsilon,\tau,\sigma}(\cdot,0)=u_0, 
\end{equation}
and constant value on the lateral boundary of the cylinder. Note that coefficients ${A_{ij}^{\epsilon,\tau \sigma}}$ satisfy the uniform parabolic condition:
\begin{equation}
\sigma|p|^2 \leq A_{ij}^{\epsilon, \tau, \sigma}(\tilde p)p_ip_j  \label{Coercivity condition}
\end{equation}
for each $\tilde p, p \in \mathbb{R}^3$. hence the theory of parabolic equations on bounded cylinders ensures that for every fixed value of the parameters there exists an  unique smooth solution $u^{\epsilon,\tau,\sigma}_r$ (see for example Ladyzenskaja, Solonnikov, Ural'tseva \cite{A12}). By maximum principle we have 
\begin{equation}
 \lVert u_r^{\epsilon, \tau, \sigma}(\cdot,t)\rVert_{\mathcal{L}^{\infty}(\mathbb{R}^2 \times \mathit{S}^1)} \leq \lVert u_0\rVert_{\mathcal{L}^{\infty}( \mathbb{R}^2 \times \mathit{S}^1)}. \label{estimate0}
\end{equation}
Letting $r$ tend to $\infty$, we obtain a solution $u^{\epsilon,\tau,\sigma}$ defined on the whole $\mathbb{R}^n \times [0,T]$ such that $$\lVert u^{\epsilon,\tau,\sigma} \rVert_{\infty} \leq \lVert u_0\rVert_{\mathcal{L}^{\infty}( \mathbb{R}^2 \times \mathit{S}^1)}. $$
\end{proof}

We can now complete the second part of the proof of Theorem \ref{First part of existence} which involves the estimate of the gradient:
\begin{theorem}\label{stimagrad}
 Under the assumption of Theorem \ref{First part of existence} and \ref{linfti}, the solution of the initial value problem (\ref{mcfest}) satisfies 
\begin{equation}
\lVert  {\nabla}_{E}u^{\epsilon,\tau, \sigma}(\cdot, t) \rVert_{\mathcal{L}^{\infty}(\mathbb{R}^2 \times \mathit{S}^1)}
\leq \lVert  {\nabla}_{E} u_0 \rVert_{\mathcal{L}^{\infty}(\mathbb{R}^2 \times \mathit{S}^1)}. 
\end{equation}
\end{theorem}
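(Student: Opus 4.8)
The plan is to differentiate equation (\ref{mcfest}) along the right invariant fields $Y_1,Y_2,Y_3$ of Lemma \ref{Vector fields commute} and to apply the parabolic maximum principle to each of the equations obtained in this way. The reason for using the $Y_k$ rather than the $X^\epsilon_i$ is precisely Lemma \ref{Vector fields commute}: since $[Y_k,X^\epsilon_i]=0$, differentiation along $Y_k$ commutes with every left invariant operator appearing on the right hand side of (\ref{mcfest}), so that no extra (second order, uncontrolled) commutator terms are produced. This is exactly the point at which the non-commutativity of the $X^\epsilon_i$ — the only genuinely new difficulty with respect to the Euclidean argument of \cite{A6} — is circumvented.

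First I would write the right hand side of (\ref{mcfest}) as $G\big(\nabla_\epsilon u,(X^\epsilon_iX^\epsilon_j u)_{ij}\big)$ with $G(p,r)=\sum_{ij}A^{\epsilon,\tau,\sigma}_{ij}(p)\,r_{ij}$, and apply $Y_k$ to both sides. Using $Y_k X^\epsilon_i u = X^\epsilon_i Y_k u$ and $Y_k X^\epsilon_i X^\epsilon_j u = X^\epsilon_i X^\epsilon_j Y_k u$, one finds that $w:=Y_k u^{\epsilon,\tau,\sigma}$ solves
\begin{equation*}
\partial_t w = \sum_{i,j=1}^{3} A^{\epsilon,\tau,\sigma}_{ij}(\nabla_\epsilon u)\,X^\epsilon_iX^\epsilon_j w \;+\; \sum_{i,j,\ell=1}^{3}\frac{\partial A^{\epsilon,\tau,\sigma}_{ij}}{\partial p_\ell}(\nabla_\epsilon u)\,\big(X^\epsilon_iX^\epsilon_j u\big)\,X^\epsilon_\ell w .
\end{equation*}
This is a linear equation which is uniformly parabolic by the coercivity condition (\ref{Coercivity condition}) (guaranteed by the term $\sigma\delta_{ij}$), whose coefficients are in $C^\alpha$ and, for fixed $\epsilon,\tau,\sigma$ and on a finite time interval, bounded — here one uses that $u^{\epsilon,\tau,\sigma}\in C^{2,\alpha}$, established in Theorem \ref{linfti} — and which, crucially, contains no zeroth order term.

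Then I would apply the parabolic maximum principle to $w$. Since $w$ solves a uniformly parabolic equation with no zeroth order term, and since hypothesis (\ref{constantS}) implies that each $Y_k u_0$ is supported in $\{x^2+y^2<S\}$ — in particular $Y_k u_0\in\mathcal{L}^\infty$ — one obtains, arguing as in Theorem \ref{linfti} on the exhausting cylinders $B(0,r)$ and then letting $r\to\infty$, that $\|Y_k u^{\epsilon,\tau,\sigma}(\cdot,t)\|_{\mathcal{L}^\infty}\le\|Y_k u_0\|_{\mathcal{L}^\infty}$ for every $t>0$. Finally I would convert these into a bound on $\nabla_E u$: the cases $\partial_x u = Y_1 u$ and $\partial_y u = Y_3 u$ are immediate, while $\partial_\theta u$ is recovered from the identity $\partial_\theta = Y_2 + y\,Y_1 - x\,Y_3$, the unbounded factors $x,y$ being harmless thanks again to the spatial localization coming from (\ref{constantS}). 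Putting the three estimates together yields $\|\nabla_E u^{\epsilon,\tau,\sigma}(\cdot,t)\|_{\mathcal{L}^\infty}\le\|\nabla_E u_0\|_{\mathcal{L}^\infty}$.

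I expect the main obstacle to be this last step, namely controlling $\partial_\theta u$ through the combination $Y_2+y\,Y_1-x\,Y_3$ uniformly in $(x,y)$, which forces a careful use of the localization hypothesis (\ref{constantS}) (and of the decay it induces on the $Y_k u$), together with the rigorous justification of the maximum principle on the unbounded domain via the cylinder exhaustion. By contrast, the differentiation step and the structure of the linearized equation are, thanks to Lemma \ref{Vector fields commute} and the coercivity introduced by $\sigma$, completely parallel to the Euclidean situation.
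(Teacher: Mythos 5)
Your first two steps coincide with the paper's proof: differentiate (\ref{mcfest}) along the right invariant fields $Y_k$, use Lemma \ref{Vector fields commute} to get the linearized equation (the paper's (\ref{Differentiate2})), and apply the parabolic maximum principle to obtain $\lVert Y_k u^{\epsilon,\tau,\sigma}(\cdot,t)\rVert_\infty\leq\lVert Y_k u_0\rVert_\infty$, which immediately controls $\partial_x u=Y_1u$ and $\partial_y u=Y_3u$. The gap is in your last step, the recovery of $\partial_\theta u$. You write $\partial_\theta=Y_2+y\,Y_1-x\,Y_3$ and claim the unbounded factors $x,y$ are harmless because (\ref{constantS}) localizes $Y_ku_0$. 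But the localization of the \emph{initial} data does not localize $Y_ku(\cdot,t)$ for $t>0$: the $Y_ku$ solve a uniformly parabolic equation, which has infinite speed of propagation, so their supports spread instantly and the sup bounds $\lVert Y_1u\rVert_\infty,\lVert Y_3u\rVert_\infty$ multiplied by $|x|,|y|$ give nothing at spatial infinity. To make your route work you would need quantitative (better than $1/|x|$) decay of $Y_1u,Y_3u$ at infinity, uniformly in $\epsilon,\tau,\sigma$, which you do not establish and which in any case would not yield the clean constant-$1$ bound of the statement.

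The paper avoids this by a frozen-coefficient trick: for each fixed $(x_0,y_0)$ it adds to the list of quantities solving the linearized equation the combination $\omega_4=Y_2u^{\epsilon,\tau,\sigma}-(y_0\partial_x-x_0\partial_y)u^{\epsilon,\tau,\sigma}$, i.e.\ a \emph{constant}-coefficient combination of the $Y_k u$'s (so it still commutes with the $X^\epsilon_i$ and the maximum principle applies to it directly), and then evaluates the resulting sup bound at the point $(x_0,y_0)$, where the extra terms vanish and $\omega_4$ reduces to $\partial_\theta u$. No decay of $Y_ku(\cdot,t)$ at infinity is needed; the only data entering the bound are values of $\nabla_E u_0$. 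So the correct repair of your argument is not ``careful use of the localization'', but replacing the variable-coefficient identity $\partial_\theta=Y_2+yY_1-xY_3$ by the family of constant-coefficient combinations $Y_2+y_0Y_1-x_0Y_3$, one for each base point, each of which is itself a solution of the linearized parabolic equation.
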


\begin{proof}
From Theorem \ref{linfti} we known that there exists an unique smooth solution $u^{\epsilon, \tau, \sigma}$ of equation (\ref{mcfest}) and we only have to estimate its gradient. To this end, we can differentiate equation (\ref{mcfest}) along the directions $\{Y_i\}_{i=1,2,3}$, and using Lemma \ref{Vector fields commute}, we obtain the following equation for 
 $w_i=Y_iu^{\epsilon, \tau, \sigma}$, for all $i=1,2,3$, and for $\omega_4= Y_2 u^{\epsilon, \tau, \sigma}  - (y_0 \partial_x - x_0 \partial_y)u^{\epsilon, \tau, \sigma}$:
\begin{equation}
\frac{\partial}{\partial t}w_i=\sum_{i,j=1}^{3}\left( A_{i,j}^{\epsilon,\tau,\sigma}(\nabla_\epsilon u^{\epsilon, \tau, \sigma})X_i^{\epsilon}X_j^{\epsilon}w_i + (\partial_{\xi_k}A_{i,j}^{\epsilon,\tau,\sigma})(\nabla_\epsilon u^{\epsilon, \tau, \sigma})X_i^{\epsilon}X_j^{\epsilon}u^{\epsilon, \tau, \sigma}X_k w_i\right). \label{Differentiate2}
\end{equation}
The parabolic maximum principle applied to the previous equation yields:
\begin{equation}
\lVert Y_i u^{\epsilon, \tau, \sigma}(\cdot,t) \rVert_{\mathcal{L}^{\infty}(\mathbb{R}^2 \times \mathit{S}^1)}\leq \lVert Y_i u_0 \rVert_{\mathcal{L}^{\infty}(\mathbb{R}^2 \times \mathit{S}^1)}  \label{estimate1}
\end{equation}
This implies that 
 $$\lVert \partial_x u^{\epsilon, \tau, \sigma}(\cdot,t) \rVert_{\mathcal{L}^{\infty}(\mathbb{R}^2 \times \mathit{S}^1)}
+ \lVert \partial_y u^{\epsilon, \tau, \sigma}(\cdot,t) \rVert_{\mathcal{L}^{\infty}(\mathbb{R}^2 \times \mathit{S}^1)}
\leq C\lVert \nabla_E u_0 \rVert_{\mathcal{L}^{\infty}(\mathbb{R}^2 \times \mathit{S}^1)}.$$
We have now to establish the estimate of the derivative $\partial_\theta$. 
For every fixed value of $(x_0, y_0)$ we have 
$$|\partial_\theta u^{^{\epsilon, \tau, \sigma}} (x_0, y_0, \theta)| \leq \max_{|y-y_0|^2+ |x_0-x|^2\leq 1} |Y_2 u^{^{\epsilon, \tau, \sigma}}  - (y_0 \partial_x - x_0 \partial_y)u^{^{\epsilon, \tau, \sigma}} |\leq $$$$
\leq \max_{|y-y_0|^2+ |x_0-x|^2\leq 1} |Y_2 u_0  - (y_0 \partial_x - x_0 \partial_y)u_0 |\leq $$
$$
\leq \max_{|y-y_0|^2+ |x_0-x|^2\leq 1} |\partial_\theta u_0|  + \max_{|y-y_0|^2+ |x_0-x|^2\leq 1} |y_0-y| |\partial_x u_0| + \max_{|y-y_0|^2+ |x_0-x|^2\leq 1} |x_0-x| |\partial_y u_0 |\leq $$$$\leq\lVert  {\nabla}_{E} u_0 \rVert_{\mathcal{L}^{\infty}(\mathbb{R}^2 \times \mathit{S}^1)}$$
\end{proof} 

Let us conclude this section remarking that the proof of Theorem \ref{First part of existence} is a direct consequence of the two Theorems \ref{linfti} and \ref{stimagrad}. 

\subsection{Existence result}

In order to extend to our setting Evans and Spruck's argument in the proof of \cite{A6}, as well as the proof of \cite{A3} we need to let go to $0$ the three approximating parameters $\sigma \rightarrow 0$,  $\tau \rightarrow 0$ and $\epsilon \rightarrow 0$. 
Since the estimates we have established are uniform in all parameters, we immediately have the 
existence of a vanishing viscosity solution:

\begin{theorem}\label{vanishvisc}
Assume that $u_0 \in \mathit{C}(\mathbb{R}^2 \times \mathit{S}^1)$ is Lipschitz continuous and satisfies (\ref{constantS}). Then there exists a vanishing viscosity solution $u \in \mathit{C}^{1,0}$ of (\ref{Smcf}), which satisfies the following properties:
\begin{equation}
\lVert u(\cdot, t) \rVert_{\mathcal{L}^{\infty}(\mathbb{R}^2 \times \mathit{S}^1)}
\leq C\lVert u_0 \rVert_{\mathcal{L}^{\infty}(\mathbb{R}^2 \times \mathit{S}^1)}  
\end{equation}
\begin{equation}
\lVert {\nabla}_{E} u(\cdot, t) \rVert_{\mathcal{L}^{\infty}(\mathbb{R}^2 \times \mathit{S}^1)}
\leq C\lVert  {\nabla}_{E} u_0 \rVert_{\mathcal{L}^{\infty}(\mathbb{R}^2 \times \mathit{S}^1)}. 
\end{equation}
\end{theorem}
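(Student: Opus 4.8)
The plan is to follow the Evans--Spruck vanishing--viscosity scheme of \cite{A6}, as it was carried out in the Heisenberg group in \cite{A3}: one sends the three approximating parameters to zero along a subsequence and extracts a limit by compactness, the whole point of Theorem \ref{First part of existence} being precisely to provide the parameter--independent bounds on which such a compactness argument rests.

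First I would reduce to smooth initial data. Since $u_0$ is Lipschitz and constant outside the cylinder $\{x^2+y^2\ge S\}$, mollification produces $u_0^\delta\in C^\infty(\mathbb{R}^2\times S^1)$, constant outside $\{x^2+y^2\ge S+1\}$, with $u_0^\delta\to u_0$ uniformly, $\|u_0^\delta\|_\infty\le\|u_0\|_\infty$ and $\|\nabla_E u_0^\delta\|_\infty\le\|\nabla_E u_0\|_\infty$. For every $\delta$ and every $\epsilon,\tau,\sigma\in(0,1]$, Theorem \ref{First part of existence} then gives a unique $u^{\epsilon,\tau,\sigma}_\delta\in C^{2,\alpha}$ solving (\ref{mcfest}) with datum $u_0^\delta$, and by (\ref{stima0})--(\ref{stima1}) applied to $u_0^\delta$ these solutions satisfy $\|u^{\epsilon,\tau,\sigma}_\delta(\cdot,t)\|_\infty\le\|u_0\|_\infty$ and $\|\nabla_E u^{\epsilon,\tau,\sigma}_\delta(\cdot,t)\|_\infty\le\|\nabla_E u_0\|_\infty$ with constants independent of all four parameters. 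Thus the family is equibounded and uniformly Lipschitz in the space variable.

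The decisive step is a modulus of continuity in time that is again uniform in $\epsilon,\tau,\sigma,\delta$. Interior parabolic estimates are of no use here, since (\ref{mcfest}) is uniformly parabolic only for $\sigma>0$ and its ellipticity constant collapses as $\sigma,\tau\to0$; instead I would argue by comparison, in the spirit of \cite{A6}. Fixing $(\xi_0,s)$, consider the capped--cone barriers $\phi^{\pm}(\xi,t)=u^{\epsilon,\tau,\sigma}_\delta(\xi_0,s)\pm\big(L\,\psi_\mu(\xi)+K(\mu)\,(t-s)\big)$ for $t\ge s$, where $L=\|\nabla_E u_0\|_\infty$ and $\psi_\mu$ is a fixed smooth function on $\mathbb{R}^2\times S^1$ with $\psi_\mu\ge\mathrm{dist}_E(\cdot,\xi_0)$, $|\nabla\psi_\mu|\le C$ and $|D^2\psi_\mu|\le C/\mu$. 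Because the coefficients $A^{\epsilon,\tau,\sigma}_{ij}$ are bounded by $1+\sigma\le2$ and the coefficients of the vector fields $X^\epsilon_i$ are bounded uniformly in $\epsilon\le1$, the quantity $\sum_{i,j}A^{\epsilon,\tau,\sigma}_{ij}(\nabla_\epsilon\phi^{+})X^\epsilon_iX^\epsilon_j\phi^{+}$ is bounded by $C'L/\mu$, so that $\phi^{+}$ is a supersolution and $\phi^{-}$ a subsolution of (\ref{mcfest}) once $K(\mu)=C'L/\mu$, with $K(\mu)$ independent of $\epsilon,\tau,\sigma,\delta$. Since the uniform spatial Lipschitz bound forces $\phi^{-}(\cdot,s)\le u^{\epsilon,\tau,\sigma}_\delta(\cdot,s)\le\phi^{+}(\cdot,s)$, the comparison principle for bounded solutions on the non--compact base $\mathbb{R}^2\times S^1$ (legitimate because $u_0^\delta$ is constant off a cylinder and all functions involved are bounded) gives $|u^{\epsilon,\tau,\sigma}_\delta(\xi_0,t)-u^{\epsilon,\tau,\sigma}_\delta(\xi_0,s)|\le L\mu+K(\mu)(t-s)$; optimizing over $\mu>0$ yields $|u^{\epsilon,\tau,\sigma}_\delta(\xi_0,t)-u^{\epsilon,\tau,\sigma}_\delta(\xi_0,s)|\le C|t-s|^{1/2}$ with $C$ depending only on $\|u_0\|_{C^{0,1}}$.

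With the uniform $L^\infty$ bound, the uniform spatial Lipschitz bound and the uniform $\tfrac12$--H\"older bound in time, $\{u^{\epsilon,\tau,\sigma}_\delta\}$ is equibounded and equicontinuous on $(\mathbb{R}^2\times S^1)\times[0,\infty)$; Ascoli--Arzel\`a on compact sets together with a diagonal extraction produces a sequence $(\epsilon_k,\tau_k,\sigma_k,\delta_k)\to0$ along which $u^{\epsilon_k,\tau_k,\sigma_k}_{\delta_k}\to u$ locally uniformly, and the limit $u$ is continuous, Lipschitz in $\xi$, and inherits the two displayed estimates (with $C=1$, hence a fortiori). Finally $u$ is a vanishing viscosity solution of (\ref{Smcf}): for each fixed $\epsilon,\tau,\sigma$ one first lets $\delta\to0$ — using uniform parabolicity at that fixed $\sigma$ and the uniform interior estimates it provides — to obtain a solution $u^{\epsilon,\tau,\sigma}$ of (\ref{mcfest}) with the original datum $u_0$ and the same uniform bounds, and then $u=\lim_k u^{\epsilon_k,\tau_k,\sigma_k}$ along a subsequence, which is exactly the definition; equivalently one may perform the diagonal extraction directly on $u^{\epsilon,\tau,\sigma}_\delta$. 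The step I expect to be genuinely delicate is precisely the time--equicontinuity: constructing the comparison barriers with a constant $K(\mu)$ that does not deteriorate as $\sigma\to0$ and $\tau\to0$, and invoking a comparison principle valid on the non--compact $\mathbb{R}^2\times S^1$; once the parameter--free estimates of Theorem \ref{First part of existence} are in hand, the remaining arguments are soft.
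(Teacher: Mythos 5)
Your proposal is correct and follows the same route as the paper: the paper's own proof is only a few lines long, and consists exactly of invoking the parameter--independent bounds (\ref{stima0})--(\ref{stima1}) of Theorem \ref{First part of existence} and extracting, via Ascoli--Arzel\`a, sequences $\sigma_k,\tau_k,\epsilon_k\to 0$ along which $u^{\epsilon_k,\tau_k,\sigma_k}$ converges to a Lipschitz limit, which is a vanishing viscosity solution by definition. What you add, and the paper leaves implicit, are precisely the two steps you flag as delicate: (i) the reduction to smooth initial data by mollification, which is genuinely needed since Theorem \ref{First part of existence} is stated for $u_0\in C^\infty$ while here $u_0$ is only Lipschitz, together with the removal of the mollification parameter so that the approximating solutions solve (\ref{mcfest}) with the original datum $u_0$, as the definition of vanishing viscosity solution requires; and (ii) the uniform-in-$(\epsilon,\tau,\sigma)$ modulus of continuity in time, obtained by your capped-cone barrier and comparison argument, without which the appeal to Ascoli--Arzel\`a from the purely spatial bounds (\ref{stima0})--(\ref{stima1}) is incomplete. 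Your barrier computation is sound (the coefficients $A^{\epsilon,\tau,\sigma}_{ij}$ are uniformly bounded and the fields $X^\epsilon_i$ have coefficients bounded uniformly for $\epsilon\le 1$, so $K(\mu)\sim L/\mu$ works independently of the parameters), and this is in the spirit of the time estimates of \cite{A6} that the paper implicitly relies on. The one ingredient of the paper's proof you do not reproduce is that the subsequence is chosen so that $\epsilon_k/\tau_k\to 0$; this plays no role in the present statement, but it is used in the proof of Theorem \ref{vivi} (to show that the third component $\eta_3$ vanishes at points where $\nabla_0\phi=0$), so if your construction is meant to feed that theorem you should impose the same constraint when extracting the diagonal subsequence.
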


\begin{proof}
Since $u_0$ is constant at infinity, we immediately deduce from Weiestrass theorem that the Euclidean gradient $\nabla_E u_0$ is bounded. Employing estimates (\ref{stima0}),(\ref{stima1}) 
and Ascoli Arzel\`{a} Theorem we can extract two sequences $\{\sigma_k\}, \{\epsilon_k\},\{\tau_k\} \rightarrow 0$ of positive numbers such that $\frac{\epsilon_k}{\tau_k}\rightarrow 0$ and such that the corresponding solutions $\{u^k=u^{\epsilon_k,\tau_k, \sigma_k}\}_{k \in \mathbb{N}}$ 
are convergent in the space of Lipshitz functions. Then, by definition the limit is a Lipshitz continuous vanishing viscosity solution.
\end{proof}

We will now prove that this vanishing viscosity solution is indeed a viscosity solution:
\begin{theorem}\label{vivi}
Assume that $u_0 \in \mathit{C}(\mathbb{R}^2 \times \mathit{S}^1)$ is continuous and satisfies (\ref{constantS}). Then the vanishing viscosity solution detected in Theorem \ref{vanishvisc} is a viscosity solution $u \in \mathit{C}^{1,0}$ of (\ref{Smcf}).
\end{theorem}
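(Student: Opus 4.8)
The plan is to follow the classical stability argument of Evans--Spruck, adapted to the sub-Riemannian setting. I would start from the sequence $u^k = u^{\epsilon_k,\tau_k,\sigma_k}$ constructed in Theorem \ref{vanishvisc}, which converges locally uniformly to $u$, with $\epsilon_k/\tau_k \to 0$. Fix $(\xi_0,t_0)$ and a test function $\phi \in C^\infty$ such that $u - \phi$ has a strict local maximum at $(\xi_0,t_0)$; by the usual perturbation trick (adding a quadratic penalty) one may assume the maximum is strict and global on a small cylinder. Then there exist points $(\xi_k,t_k) \to (\xi_0,t_0)$ at which $u^k - \phi$ attains a local maximum. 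At such a point the classical (smooth) equation \eqref{mcfest} holds for $u^k$, and the standard maximum-point relations give $\nabla_\epsilon u^k(\xi_k,t_k) = \nabla_\epsilon \phi(\xi_k,t_k)$, $\partial_t u^k = \partial_t \phi$, and the Hessian inequality $X_i^\epsilon X_j^\epsilon(u^k - \phi) \le 0$ in the matrix sense, so that $\sum_{ij} A_{ij}^{\epsilon_k,\tau_k,\sigma_k}(\nabla_\epsilon\phi)\, X_i^\epsilon X_j^\epsilon u^k \le \sum_{ij} A_{ij}^{\epsilon_k,\tau_k,\sigma_k}(\nabla_\epsilon\phi)\, X_i^\epsilon X_j^\epsilon \phi$ (using that $A^{\epsilon,\tau,\sigma}$ is positive semidefinite, which is exactly the coercivity \eqref{Coercivity condition} plus $\sigma\delta_{ij}$). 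Combining, $\partial_t\phi(\xi_k,t_k) \le \sum_{ij} A_{ij}^{\epsilon_k,\tau_k,\sigma_k}(\nabla_\epsilon\phi(\xi_k,t_k))\, X_i^\epsilon X_j^\epsilon\phi(\xi_k,t_k)$.

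Next I would pass to the limit $k \to \infty$ in this inequality. All quantities involving $\phi$ are smooth, so they converge to their values at $(\xi_0,t_0)$. The $\sigma_k\delta_{ij}$ term vanishes. The remaining issue is the convergence of $A_{ij}^{\epsilon_k,\tau_k}(\nabla_\epsilon\phi) = \delta_{ij} - \dfrac{X_i^\epsilon\phi\, X_j^\epsilon\phi}{|\nabla_\epsilon\phi|^2 + \tau_k}$ together with the fact that $X_3^\epsilon = \epsilon X_3$, so $X_3^\epsilon\phi \to 0$ and $X_3^\epsilon X_3^\epsilon\phi \to 0$, while $|\nabla_\epsilon\phi|^2 = (X_1\phi)^2 + (X_2\phi)^2 + \epsilon_k^2(X_3\phi)^2 \to |\nabla_0\phi|^2$. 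One must distinguish two cases. If $|\nabla_0\phi(\xi_0,t_0)| \neq 0$, then the denominators are bounded away from $0$ for large $k$, the $i=3$ or $j=3$ contributions vanish in the limit because of the $\epsilon_k$ factors, and one obtains exactly $\partial_t\phi \le \sum_{i,j=1}^2 A_{ij}^0(\nabla_0\phi)\, X_iX_j\phi$, the first branch of the subsolution definition. If $|\nabla_0\phi(\xi_0,t_0)| = 0$, then by compactness one extracts a further subsequence along which $\dfrac{X_i^\epsilon\phi}{\sqrt{|\nabla_\epsilon\phi|^2+\tau_k}} \to \tilde p_i$ for some vector with $|\tilde p| \le 1$ (in particular $\tilde p_3 = 0$ again by the $\epsilon_k$ factor, and the condition $\epsilon_k/\tau_k \to 0$ guarantees the third component is controlled), and the limit inequality becomes $\partial_t\phi \le \sum_{i,j=1}^2 A_{ij}^0(\tilde p)\, X_iX_j\phi$ with $|\tilde p|\le 1$, matching the degenerate branch. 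This shows $u$ is a viscosity subsolution; the supersolution property follows by the symmetric argument (reversed inequalities, local minima), and hence $u$ is a viscosity solution. The regularity $u \in C^{1,0}$ is inherited from the uniform gradient bound \eqref{stima1} already established.

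The main obstacle I anticipate is the degenerate case $|\nabla_0\phi| = 0$, and more precisely the bookkeeping of the third ($X_3$) direction in the Riemannian approximation. One has to check carefully that the rescaled quantities $X_i^\epsilon\phi/\sqrt{|\nabla_\epsilon\phi|^2+\tau_k}$ converge (along a subsequence) to an admissible $\tilde p$ in the two-dimensional horizontal space, i.e. that the $\epsilon X_3\phi$ contribution does not survive and does not spoil the bound $|\tilde p|\le 1$; this is where the scaling hypothesis $\epsilon_k/\tau_k \to 0$ from Theorem \ref{vanishvisc} is essential, since $\epsilon_k^2(X_3\phi)^2/\tau_k \to 0$. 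A secondary technical point is the standard but slightly delicate argument that local maxima of $u - \phi$ are approximated by local maxima of $u^k - \phi$; this is routine once the convergence $u^k \to u$ is locally uniform and the maximum of $u-\phi$ is made strict.
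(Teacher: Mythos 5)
Your proposal is correct and follows essentially the same route as the paper's proof: the Evans--Spruck stability argument at maximum points of $u^k-\phi$, passing to the limit in the approximating equation, splitting into the cases $|\nabla_0\phi|\neq 0$ and $|\nabla_0\phi|=0$, and using the scaling $\epsilon_k/\tau_k\to 0$ to kill the $X_3$ component of the rescaled gradient (the paper's $\eta^k$), with the strict-maximum reduction handled by a polynomial perturbation and the supersolution case by symmetry.
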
 

\begin{proof}
 In order to prove that $u$ is a viscosity solution we consider a function $\phi \in \mathit{C}^{\infty}(\mathbb{R}^2 \times \mathit{S}^1 \times [0,\infty))$ and we suppose that $u-\phi$ has a strict local maximum at a point $(\xi_0,t_0)\in \mathbb{R}^2 \times \mathit{S}^1 \times [0,\infty)$. 
Since $u$ is a Lipschitz continuous vanishing viscosity solution, it can be uniformly approximated 
by solutions $(u^k)$ of the approximating Riemannian problem (see also Theorem \ref{vivi}). 
As $u^k \rightarrow u$ uniformly near $(\xi_0,t_0)$, $u^k-\phi$ has a local maximum at a point $(\xi_k,t_k)$, with 
\begin{equation}
(\xi_k,t_k)\rightarrow (\xi_0,t_0) \mbox{\quad as \quad} k\rightarrow \infty \label{sequenza punti}
\end{equation}
Since $u^{k}$ and $\phi$ are smooth, we have 
$$\nabla_E u^k=\nabla_E \phi \mbox{\, , \,} \partial_t u^k=\partial_t \phi \mbox{\, and \,} \mathit{D}^2_E(u^k-\phi) \leq 0 \mbox{\, at \,} (\xi_k,t_k)$$
where $\mathit{D}^2_E$ is the Euclidean Hessian. 
Thus 
\begin{equation}
\partial_t \phi - \big(\delta_{ij} - \frac{X_i^{\epsilon_k}\phi X_j^{\epsilon_k}\phi}{|\nabla_{\epsilon_k} \phi|^2 + \tau_k^2} \big)X_i^{\epsilon_k}X_j^{\epsilon_k}\phi \leq 0 \mbox{\, at \,} (\xi_k,t_k)
\end{equation}
This inequality can be equivalently expressed in terms of the coefficients $A_{i,j}^{\epsilon,\tau}$ as follows. At the point $(\xi_k,t_k)$
\begin{eqnarray}
\partial_t \phi &-& A_{i,j}^{\epsilon_k,\tau_k}(\nabla_{\epsilon_k}\phi)X_i^{\epsilon_k}X_j^{\epsilon_k}\phi\\ &\leq& \partial_t u^k - A_{i,j}^{\epsilon_k,\tau_k}(\nabla_{\epsilon_k}u^k)X_i^{\epsilon_k}X_j^{\epsilon_k}(u^k+\phi-u^k)  \leq 0 \label{equation in which we pass to limit}
\end{eqnarray}
If $\nabla_0{\phi}(\xi_0,t_0) \neq 0$, also $\nabla_0{\phi}(\xi_k,t_k) \neq 0$ for sufficiently large $k$. Then letting $k\rightarrow \infty$ we obtain from  (\ref{equation in which we pass to limit}):
\begin{equation}
\partial_t \phi \leq \sum_{i,j=1}^{2}\big(\delta_{ij} - \frac{X_i\phi X_j\phi}{{|\nabla_0 \phi|}^2} \big) X_iX_j \phi \mbox{\, at \,} (\xi_0,t_0)
\label{important}\end{equation}
which implies that $u$ is a viscosity subsolution.\\
If $\nabla_0{\phi}(\xi_0,t_0) = 0$ then we set
$$\eta^{k}= \frac{\nabla_{\epsilon_k}\phi(\xi_k,t_k)}{\sqrt{|\nabla_{\epsilon_k}\phi(\xi_k,t_k)|^2+\tau_k^2}}$$
There exists $\eta \in \mathbb{R}^3$ such that $\eta^k \rightarrow \eta$. Note that 
$$|(\eta^k)_3|=\frac{\epsilon_k|X_3 \phi(\xi_k,t_k)|}{\sqrt{|\nabla_{\epsilon_k}\phi(\xi_k,t_k)|^2+\tau_k^2}}\leq \frac{(\epsilon_k/\tau_k)|X_3 \phi(\xi_k,t_k)|}{\sqrt{(\epsilon_k/\tau_k)^2\sum_{i=1}^{2}(X_i\phi(\xi_k,t_k))^2+1}}$$
Since the expression vanishes as $k \rightarrow \infty$ we have $\eta_3=0$. The PDE (\ref{equation in which we pass to limit}) now reads as:
$$\partial_t \phi(\xi_k,t_k) - \sum_{i,j=1}^{3}(\delta_{ij}-\eta_i^k \eta_j^k)X_i^{\epsilon_k}X_j^{\epsilon_k}\phi(\xi_k,t_k) \leq 0$$
so as $k \rightarrow \infty$ we obtain
\begin{equation}
\partial_t \phi(\xi_0,t_0) \leq \sum_{i,j=1}^{2}(\delta_{ij}-\eta_i \eta_j)X_iX_j\phi(\xi_0,t_0) 
\label{important2}\end{equation}
concluding the proof for the case in which $u- \phi$ has a local strict maximum at point $(\xi_0,t_0)$. If $u- \phi$ has a local maximum, but not necessarily a strict local maximum at $(\xi_0,t_0)$, we can repeat the argument above replacing $\phi(x,t)$ with $$\tilde{\phi}(\xi,t)= \phi(\xi,t)+|\xi-\xi_0|^4+(t-t_0)^4$$
again to obtain (\ref{important}),(\ref{important2}). Consequently $u$ is a weak subsolution. That $u$ is a weak supersolution follows analogously.

\end{proof}

From the above result we can only say that there is a subsequence of $u^{\epsilon,\tau, \sigma}$ which is convergent to the vanishing viscosity solution $u$. In order to prove the uniqueness of the vanishing viscosity solution, we would need the sub-Riemannian analogous of estimate established by Deckelnick and Dzuik in \cite{Deck}: 
\begin{proposition}\label{1}   There exists a constant $C>0$  independent of $\sigma, \tau$ and $\epsilon$ 
such that: 
\begin{equation}
\lVert u^{\epsilon,\tau, \sigma} - u\rVert_\infty \leq C \tau^\alpha
\end{equation}
\end{proposition}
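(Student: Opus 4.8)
\emph{Proof idea.} The plan is to adapt to the sub-Riemannian setting the doubling of variables argument of Deckelnick \cite{Deck}, comparing directly the viscosity solution $u$ of (\ref{Smcf}) constructed in Theorems \ref{vanishvisc}--\ref{vivi} with the classical solution $u^{\epsilon,\tau,\sigma}$ of the uniformly parabolic problem (\ref{mcfest}). One works along a sequence of parameters with $\epsilon$ and $\sigma$ much smaller than $\tau$, so that the quantities they produce can be reabsorbed into the $\tau$--error, consistently with the construction of Theorem \ref{vanishvisc}; the constant $C$ does not depend on their particular values. The crucial ingredients are: the uniform Euclidean Lipschitz bounds of Theorem \ref{First part of existence} (both functions satisfy $|\nabla_E\,\cdot\,|\le L$ with $L$ independent of all parameters, so that the degeneracy of the coefficients $A^{\epsilon,\tau,\sigma}_{ij}$ is felt only where the gradient surrogate is small); the smoothness of $u^{\epsilon,\tau,\sigma}$, which makes it admissible both as a test function for $u$ and as a pointwise solution of its own equation; the matching initial data $u^{\epsilon,\tau,\sigma}(\,\cdot\,,0)=u_0=u(\,\cdot\,,0)$; and the constancy outside a cylinder (\ref{constantS}), which prevents any loss of mass at infinity.

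For parameters $\lambda,\mu>0$ to be chosen later as suitable powers of $\tau$, and an auxiliary $\gamma>0$ to be sent to $0$ at the end, I would examine the maximum over $(\R^2\times S^1)^2\times[0,T]$ (fixed $T$) of
$$
\Psi(\xi,\zeta,t,s)=u(\xi,t)-u^{\epsilon,\tau,\sigma}(\zeta,s)-\frac{\rho(\zeta,\xi)^2}{2\lambda}-\frac{(t-s)^2}{2\mu}-\gamma\,\omega(\xi,t),
$$
where $\rho(\zeta,\xi)$ is a smooth function comparable to $|\xi-\zeta|$ near the diagonal --- chosen adapted to the group law, $\rho(\zeta,\xi)\asymp\|\zeta^{-1}\!\cdot\xi\|$ for a homogeneous gauge $\|\cdot\|$ of $SE(2)$, rather than the plain Euclidean distance, for the reason explained below --- and $\omega$ grows in $t$ and at spatial infinity so as to confine the maximiser to an interior point $(\bar\xi,\bar\zeta,\bar t,\bar s)$. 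Routine manipulations of the penalisation terms, together with the uniform Lipschitz bound, give $\rho(\bar\zeta,\bar\xi)\le C\sqrt\lambda$ and $|\bar t-\bar s|\le C\sqrt\mu$. If $\bar t=0$ or $\bar s=0$ one concludes immediately from the common Lipschitz initial datum and the Lipschitz dependence on time of both solutions (read off from the equations and the spatial gradient bound), obtaining $\Psi\le C(\sqrt\lambda+\sqrt\mu)+C\gamma$.

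Otherwise $\bar t,\bar s>0$, and I would invoke the parabolic theorem on sums in the Euclidean variables: there exist symmetric matrices $X\le Y$ with $\|X\|,\|Y\|\le C/\lambda$, a common vector $P$ (the Euclidean gradient at $\bar\xi$ of the penalisation, which also equals $\nabla_E u^{\epsilon,\tau,\sigma}(\bar\zeta,\bar s)$) and a common $q$ such that $(P,X,q)$ belongs to the closed parabolic superjet of $u$ at $(\bar\xi,\bar t)$ while $(P,Y,q)$ is realised by the classical derivatives of $u^{\epsilon,\tau,\sigma}$ at $(\bar\zeta,\bar s)$. Expressing each of the operators $\sum_{i,j}A^0_{ij}X_iX_j$ and $\sum_{i,j}A^{\epsilon,\tau,\sigma}_{ij}X^\epsilon_iX^\epsilon_j$ as the contraction of the Euclidean Hessian against a bounded nonnegative matrix $M^0(\cdot)$, resp.\ $M^{\epsilon,\tau,\sigma}(\cdot)$, plus a first order remainder, the viscosity subsolution inequality for $u$ (using, where the projection of $P$ is too small, the alternative clause of the definition with $|\tilde p|\le1$ and choosing $\tilde p$ to match the corresponding vector on the other side) minus the equation satisfied by $u^{\epsilon,\tau,\sigma}$ yields, after using $X\le Y$ and $M^{0}(P)\ge0$ to discard the nonpositive term $\langle M^0(P),X-Y\rangle$, an inequality of the shape
$$
0\ \le\ \langle M^0(P)-M^{\epsilon,\tau,\sigma}(P),\,Y\rangle\ +\ R .
$$
The first term is controlled by $\|M^0(P)-M^{\epsilon,\tau,\sigma}(P)\|\,\|Y\|\le C\big(\tau(|P|^2+\tau)^{-1}+\sigma+\epsilon\big)\lambda^{-1}$, the factor $\tau(|P|^2+\tau)^{-1}$ being handled by splitting into $|P|$ large versus small, exploiting in the latter regime the freedom in $\tilde p$ to match the two matrices up to $O(\tau+\sigma)$; the remainder $R$ gathers the first order terms arising from the non-commutativity and the $\theta$--dependence of the coefficients of $X_1,X_3$. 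Collecting everything and optimising $\lambda$ (and $\mu$) as powers of $\tau$ --- balancing, e.g., $\sqrt\lambda$ against $\tau\lambda^{-1}$ --- gives $\max\Psi\le C\tau^\alpha$ for a suitable $\alpha\in(0,1)$; letting $\gamma\to0$ and repeating the argument for $u^{\epsilon,\tau,\sigma}-u$ (now using that $u$ is a \emph{super}solution) yields $\lVert u^{\epsilon,\tau,\sigma}-u\rVert_\infty\le C\tau^\alpha$.

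I expect the main obstacle to be precisely the control of the remainder $R$. In the Euclidean argument of \cite{Deck} the vector fields are constant and these first order terms are absent; here the $X_i$ have $\theta$--dependent coefficients and do not commute, and a naive Euclidean penalisation leaves in $R$ a term of order $|P|\asymp\lambda^{-1/2}$ that does not vanish. The remedy is to perform the doubling in harmony with the group structure --- penalising a homogeneous gauge of $\zeta^{-1}\!\cdot\xi$, equivalently working with the right invariant frame $\{Y_i\}$ of Lemma \ref{Vector fields commute} --- so that the two copies of the first order term (for $u$ at $\bar\xi$ and for $u^{\epsilon,\tau,\sigma}$ at $\bar\zeta$) are left invariant translates of one another and cancel up to $O(\epsilon)$ and up to the Lipschitz oscillation $O(\rho(\bar\zeta,\bar\xi))=O(\sqrt\lambda)$ of the coefficients, giving $R\le C(\sqrt\lambda+\epsilon\lambda^{-1/2})$. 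Once this cancellation is secured, what remains is the routine balancing of parameters, which also fixes the exponent $\alpha$.
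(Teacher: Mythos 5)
You should know at the outset that the paper does not prove Proposition \ref{1} at all: it is introduced with the words ``we would need the sub-Riemannian analogous of estimate established by Deckelnick and Dzuik'', i.e.\ it is stated as a needed ingredient with only the citation \cite{Deck} behind it, and no argument is given. So there is no paper proof for your sketch to match; it has to stand on its own, and as written it does not yet close. The strategy (doubling of variables in the spirit of Deckelnick, comparing the viscosity solution $u$ with the smooth solution $u^{\epsilon,\tau,\sigma}$, penalising with a gauge adapted to the group and balancing $\lambda,\mu$ as powers of $\tau$) is the natural one, and you correctly identify that the new difficulty relative to \cite{Deck} comes from the point-dependence and non-commutativity of the frame $X_1,X_2,X_3$.

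The genuine gap is precisely at the step you dispose of by assertion. After rewriting both operators as contractions of Euclidean Hessians against matrices $M^0(\cdot)$ and $M^{\epsilon,\tau,\sigma}(\cdot)$, these matrices depend on the base point through $\theta$ (the $\cos\theta,\sin\theta$ coefficients of $X_1$ and $X_3$), so besides the mismatch in $\tau,\sigma,\epsilon$ that you estimate, the inequality contains a term of the form $\langle M(\bar\xi,P)-M(\bar\zeta,P),Y\rangle$, of size $O(\rho(\bar\zeta,\bar\xi))\,\lVert Y\rVert = O(\sqrt\lambda\cdot\lambda^{-1})=O(\lambda^{-1/2})$, which diverges as $\lambda\to0$. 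The group-adapted gauge, as you describe it, cures the first-order remainder coming from non-commutativity, but not this second-order coefficient oscillation multiplied by the Hessian bound; in the Euclidean case the fields are constant and the term simply does not exist, and removing it here is the whole difficulty (it is essentially why the comparison principle for (\ref{Smcf}) is not claimed in the paper). Moreover, once you replace $|\xi-\zeta|^2$ by a gauge of $\zeta^{-1}\cdot\xi$, the standard parabolic theorem on sums no longer hands you a common vector $P$ with $X\le Y$: the $\xi$- and $\zeta$-gradients of $\rho^2$ differ by the differential of the group translation, and the matrix inequality acquires error terms of order $\lambda^{-1}$ times the non-Euclidean part of $D^2\rho^2$, which must be estimated before anything can be cancelled. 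The treatment of the degenerate regime ($|P|$ small, where the $\tilde p$-clause of the definition is invoked and the matrices are ``matched up to $O(\tau+\sigma)$'') is likewise only waved at, and the restriction to sequences with $\epsilon,\sigma\ll\tau$, while consistent with Theorem \ref{vanishvisc}, is weaker than the uniformity in all three parameters asserted in the statement. In short: right strategy, correct diagnosis of the obstacle, but the decisive estimates that would overcome it are claimed rather than proved.
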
 

Letting $\epsilon$ and $\sigma$ go to $0$ we also get: 
\begin{equation}
\lVert u^{\tau}- u\rVert_\infty \leq C \tau^\alpha
\end{equation}
where $u^\tau$ is a solution of (\ref{mcfgraph}).

\section{Numerical scheme}\label{sec4}

In this part we provide the numerical approximation we used to implement the sub-Riemannian motion by curvature which performs inpainting and enhancement. Since our scheme is directly inspired by the classical one of Osher and Sethian (see \cite{A13}),   we will explain how to adapt the discretization to the sub-Riemannian setting.  
Let us recall here how the scheme of Osher and Sethian in \cite{A13} can be adapted to our sub-Riemannian setting.
The mean curvature flow (\ref{mcfgraph}) can be explicitly written as: 
\begin{equation}\label{explicitmcf}
u_t = \frac{(X_2(u))^2 \cdot  X_{11}(u) +  (X_1(u))^2\cdot X_{22}(u) - X_1(u) X_2(u)\cdot 2X_{12}(u) }{(X_1(u))^2 + (X_2(u))^2+ \tau} 
+ \frac{X_1(u)X_2(u)\cdot [X_1,X_2](u)}{(X_1(u))^2 + (X_2(u))^2+\tau}\end{equation}

This equation presents two distinct terms: the first part of the flow presents second order derivatives and corresponds to the curvature term, the second one has only first order derivatives and correspond to the metric connection. 
The solution $u(x,y,\theta,t)$ is discretized on a regular grid with points $x_i=i\Delta x,y_j=j\Delta y, \theta_k=k\Delta \theta$, with time discretization $t_s=s\Delta t$. We will denote$\mathit{D}^{+x}U(i,j,k,s)$, $\mathit{D}^{-x}U(i,j,k,s)$, $\mathit{D}^{0 x}U(i,j,k,s)$ the forward, backward and central  difference of a discrete function $U$ at point $(i,j,k,s)$ with respect to $x$, and use analogous notations for the other variables $y$ an $\theta$. In terms of these derivatives we will define the analogous differences in the direction of the vector fields $X_1$ and $X_2$. Precisely if we have discretized the direction $\theta$ with $K$ points, we will denote $\theta_k = k\pi/K$ for  $k=1, \cdots K$, and we will call 
$$\mathit{D}^{+X_1}U(i,j,k,s) = \cos\theta_k \mathit{D}^{+x}U(i,j,k,s) + \sin\theta_k \mathit{D}^{+y}U(i,j,k,s)$$
and analogously define backward and central  difference 
$\mathit{D}^{-X_1}$, $\mathit{D}^{0X_1}$ for the vector $X_1$ and for the vector $X_2$. 
Let us adapt the scheme proposed by Osher and Sethian in \cite{A13} to our case:
\begin{itemize}
\item[(i)] the first order term $X_1(u)X_2(u)\cdot [X_1,X_2](u)$ is discretized using the upwind scheme for $[X_1,X_2]=X_3$. 
Taking into account the upwind scheme for the vector field $X_3$,  the first order term is given by:
\begin{equation}
W^1 (U) = - \frac{
\max(-\sin\theta_k \mathit{D}^{0X_1}U \mathit{D}^{0X_2}U,0) \mathit{D}^{-x}U + \min(-\sin\theta_k \mathit{D}^{0X_1}U \mathit{D}^{0X_2}U,0) \mathit{D}^{+x}U }{|\mathit{D}^{0X_1}U|^2+ |\mathit{D}^{0X_1}U|^2 + \tau }+  \end{equation}
$$ - \frac{\max(\cos\theta_k \mathit{D}^{0X_1}U \mathit{D}^{0X_2}U ,0) \mathit{D}^{-y} U + \min(\cos\theta_k \mathit{D}^{0X_1}U \mathit{D}^{0X_2}U ,0) \mathit{D}^{+y}U}{|\mathit{D}^{0X_1}U|^2+ |\mathit{D}^{0X_1}U|^2 + \tau}.$$
\item[(ii)]  second order derivatives are implemented as 
usual as $D^{- X_1} D^{+ X_1}$, $D^{- X_2} D^{+ X_2}$, $D^{0 X_1} D^{0 X_2}$, which leads to 
second order central finite difference. We will implemented as central differences the first derivatives coefficients of $D^{- X_1} D^{+ X_1}$, $D^{- X_2} D^{+ X_2}$. The first derivative with respect to $X_1$, coefficient of the second mixed derivative, will be upwinded as before. Generalizing an idea of  \cite{POSS}, the denominator will be a mean of central derivatives: 
$$|D_{int} U|^2(i,j,k,s) + \tau  =  \frac{1}{3} \sum_{{k_1 \in \{k-1, k, k+1\}}}|D^{0 X_1} U|^2(i,j,k_1,s) +\frac{1}{5} \sum_{{i_1 \in I}}|D^{0 X_2} U|^2(i_1,j_1,k,s) + \tau,$$
where $I$ is the family of indices $I=\{(i-1, j), (i, j), (i+1, j), (i, j-1),(i, j+1) \}$.

The second order discretized operator will be denoted $W^2 (U)(i,j,k, s) $. 

\end{itemize} 

The difference equation associated to the continuous equation (\ref{explicitmcf}) will be expressed as: 
$$U(i,j,k, s+1) = U(i,j,k, s) + \Delta t (W^2 U)(i,j,k, s) + \Delta t (W^1 U)(i,j,k, s)$$
with initial condition $U(., 0) =U_0$.  
We recall that  convergence of difference schemes for the
mean curvature 
ow inspired by the scheme of Osher and Sethian has been object of a large number of
papers in the Euclidean setting. The stability of one of them was proved in \cite{POSS}.
Another monotone scheme was proposed by Crandall and Lions (see \cite{Crandall}) and its convergence was proved by Deckelnick in \cite{DECKELNICK} and Deckelnick \& Dzuik in \cite{Deck}.  

The ideas at the basis of the stability proof of \cite{POSS} can be extended to the 
present version of the Osher and Sethian scheme, leading to the following result: 

\begin{theorem}
The difference scheme presented above is stable in the sense that 
if $\Delta t \leq \frac{h^2}{10}$, then 
$$||U||_{\infty} \leq ||U_0||_{\infty} $$
\end{theorem}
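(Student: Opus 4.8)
The plan is to follow the classical stability argument of Osher and Sethian as adapted in \cite{POSS}, and show that the explicit scheme
$$U(i,j,k,s+1) = U(i,j,k,s) + \Delta t\,(W^2U)(i,j,k,s) + \Delta t\,(W^1U)(i,j,k,s)$$
satisfies a discrete maximum principle under the CFL-type condition $\Delta t \leq h^2/10$. The key structural fact I would exploit is that the right-hand side, viewed as a function of the grid values of $U$, is a \emph{monotone} (order-preserving) update: if one writes the increment $U(i,j,k,s+1)$ as a function $G$ of $U(i,j,k,s)$ and of its neighbours, then $G$ is nondecreasing in each neighbour value and, crucially, the coefficient multiplying the central value $U(i,j,k,s)$ itself remains nonnegative precisely when $\Delta t$ is small enough. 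Summing the coefficients of the neighbours plus the central coefficient gives exactly $1$, because all the finite-difference operators $D^{\pm X_1}D^{\mp X_1}$, $D^{\pm X_2}D^{\mp X_2}$, the mixed term, and the upwinded first-order term $W^1$ are consistent discretizations of operators that annihilate constants — so the scheme reproduces constants exactly. Once monotonicity and this convex-combination property are in hand, $U(\cdot,s+1)$ is a convex combination of values of $U(\cdot,s)$, and $\|U(\cdot,s+1)\|_\infty \leq \|U(\cdot,s)\|_\infty$ follows immediately by induction on $s$.

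The steps, in order: (1) Write the one-step update explicitly, collecting the contribution of each grid neighbour. For the second-order part $W^2$, the pure-direction terms $D^{-X_1}D^{+X_1}U$ and $D^{-X_2}D^{+X_2}U$ contribute stencils of the standard form $(U_{+}-2U_0+U_-)/h^2$ with nonnegative off-diagonal weights; the mixed term $D^{0X_1}D^{0X_2}U$ is handled as in \cite{POSS} by writing the cross-derivative through a rotated combination so its net effect does not spoil positivity, and here the first-derivative coefficient of the mixed term is upwinded to preserve sign. (2) For the first-order part $W^1$, observe that the $\max/\min$ upwinding of $X_3 = [X_1,X_2]$ is designed exactly so that the coefficient of each one-sided difference $D^{\pm x}U$, $D^{\pm y}U$ has a definite sign, making $W^1$ monotone in the neighbour values. (3) Verify that the denominator $|D_{int}U|^2 + \tau$, being a mean of squared central differences plus $\tau>0$, is strictly positive and enters only as a positive scalar factor, hence does not affect monotonicity. (4) Bound the sum of the absolute values of all neighbour coefficients: each of the five relevant second-order stencils contributes at most $2/h^2$ in magnitude from its off-center weights, and with the normalization by $(X_1u)^2+(X_2u)^2+\tau$ the effective coefficients are bounded so that the total is at most $10/h^2$ (counting the $x$, $y$, mixed, and first-order contributions); this is where the constant $10$ and the condition $\Delta t \leq h^2/10$ come from. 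Imposing $\Delta t\cdot(\text{sum of neighbour coefficients}) \leq 1$ then forces the central coefficient $1 - \Delta t\,(\cdots) \geq 0$. (5) Conclude: $U(i,j,k,s+1)$ is a convex combination of grid values at time $s$, so $\min U(\cdot,s) \leq U(i,j,k,s+1) \leq \max U(\cdot,s)$, and iterating gives $\|U\|_\infty \leq \|U_0\|_\infty$.

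The main obstacle I expect is step (1)--(2) for the \emph{mixed second-order term} together with the upwinded first-order term: in the Euclidean scheme of \cite{POSS} the delicate point is showing that the cross-derivative discretization, combined with the pure-direction ones, still yields nonnegative weights, and here there is the extra complication that $X_1$ and $X_2$ do not commute, so the genuine commutator $[X_1,X_2]=X_3$ produces the additional first-order term $W^1$ that must be absorbed into the same positivity bookkeeping. One has to check that the upwinding of $-\sin\theta_k D^{0X_1}U\,D^{0X_2}U$ and $\cos\theta_k D^{0X_1}U\,D^{0X_2}U$ against $D^{\pm x}$ and $D^{\pm y}$ is compatible — i.e. uses the same one-sided differences with consistent signs — as the terms coming from expanding $D^{-X_1}D^{+X_1}$ and $D^{-X_2}D^{+X_2}$ in the $x,y$ coordinates, so that no cancellation turns a coefficient negative. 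Granting the analysis of \cite{POSS} for the Euclidean model terms and checking that the extra connection term $W^1$ contributes a bounded, sign-definite perturbation, the stability estimate follows with the stated constant.
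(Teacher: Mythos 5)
Your proposal is correct and follows essentially the same route as the paper: both proofs are a discrete maximum principle under the CFL condition, resting on the facts that the scheme annihilates constants, that the upwinding makes the off-center coefficients sign-definite, and that the bookkeeping of all normalized coefficients gives the bound $10/h^2$ so the central coefficient $1-10\Delta t/h^2$ stays nonnegative. The only cosmetic difference is that the paper argues one-sidedly, setting $V=U-\lVert U_0\rVert_\infty$ (which solves the same difference equation) and propagating $V\leq 0$ by induction, whereas you phrase the same mechanism as the update being a convex combination of the previous grid values.
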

\begin{proof}
If $U$ is a solution of the discrete equation, also 
$V = U - ||U_0||_{\infty} $ is a solution of the same equation: 
$$V(i,j,k, s+1) = V(i,j,k, s) + \Delta t(W^2 V)(i,j,k, s) + \Delta t(W^1  V)(i,j,k, s).$$
Hence $V(0)\leq 0$, and we have to prove $V \leq 0$, for all time. 
In order to study the term $D^{w X_3} V$ we have to discuss the sign of 
$a_1=-\sin\theta_k \mathit{D}^{0X_1}V \mathit{D}^{0X_2}V$ and 
$a_2=\cos\theta_k \mathit{D}^{0X_1}V \mathit{D}^{0X_2}V$: we will assume that they are 
both positive since the proof is similar in all the other cases: 
In this case 
$$(W^1 V)(i,j,k, s) =
-
\frac{  a_1 (V(i,j,k, s)- V(i-1,j,k, s)) + a_2 (V(i,j,k, s)- V(i,j-1,k, s)}{|\mathit{D}^{0X_1}V|^2+ |\mathit{D}^{0X_2}V|^2 + \tau }
 \leq $$
$$-\frac{ (\cos(\theta_k)- \sin(\theta_k)) \mathit{D}^{0X_1}V(i,j,k,s) \mathit{D}^{0X_2}V(i,j,k,s)}{(|\mathit{D}^{0X_1}V|^2+ |\mathit{D}^{0X_2}V|^2 + \tau )h^2} V(i,j,k, s)\leq-\frac{ V(i,j,k, s)}{2 h^2}$$

Analogously, having upwinded the coefficient of $(W^2 V)(i,j,k, s)$, e get a similar behavior. The mixed derivatives term can be estimated as: 
$$
- \frac{2\cos\theta_k \mathit{D}^{0X_1} (\mathit{D}^{0X_2})V  \mathit{D}^{0X_2}V }{|D_{int} V|^2 +\tau} V(i,j,k, s)\leq - \frac{10 V(i,j,k, s)}{h^2}.$$

In conclusion 
$$V(i,j,k, s+1)\leq V(i,j,k, s)(1 - \frac{10 \Delta t}{h^2})\leq 0.$$
The assertion then follows by induction.
\end{proof}

Now we recall that the equation is uniformly parabolic in sub-elliptic sense. Arguing as in 
\cite{Deck} the estimates of 4th order derivatives can be reduced to the estimates of graphs 
over the considered group. Hence estimates can be obtained by a recent result of Capogna, Citti and Manfredini (see \cite{CapCitManf}). 
Since for $\tau$ fixed the equation is uniformly parabolic in sub-elliptic sense, these estimates, 
allow to prove that 

\begin{theorem}
If $u^\tau$ is the solution of (\ref{mcfgraph}) with initial condition $u_0$ and $U$ is the solution of the 
discrete scheme considered here, and $\alpha$ is fixed, there exist a constant $C= C(\tau, h, \alpha)$ such that if $\Delta t\leq C(\tau, h)$, then 
$$|u^\tau(i \Delta x, j \Delta y, k \Delta \theta, s \Delta t) - U(i,j,k, s) |\leq \tau^\alpha,$$
\end{theorem}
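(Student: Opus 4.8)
The plan is to combine two ingredients: a discrete counterpart of the stability/comparison structure already used above (the monotonicity of the Osher--Sethian-type scheme, encoded in the previous theorem's argument), and interior regularity estimates for the continuous problem \eqref{mcfgraph}. Since $\tau$ is fixed, equation \eqref{mcfgraph} is a quasilinear uniformly parabolic equation in the sub-elliptic sense (its coefficients $A^\tau_{ij}(\nabla_0 u)$ have smallest eigenvalue bounded below by $\tau/(|\nabla_0 u|^2+\tau)>0$ on any set where $\nabla_0 u$ is bounded, and the latter follows from the gradient bound \eqref{stima1} transported through the Corollary). The first step is therefore to record that $u^\tau$ enjoys bounds on its derivatives up to fourth order that are uniform on the grid scale: following \cite{Deck}, one differentiates the equation along the right-invariant fields $Y_i$ (which commute with the $X_i^\epsilon$ by Lemma \ref{Vector fields commute}, so the differentiated equations stay in good form), reduces the higher-order estimates to Schauder-type estimates for graphs over $SE(2)$, and invokes the sub-elliptic regularity theory of Capogna, Citti and Manfredini \cite{CapCitManf}. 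This yields $\|u^\tau\|_{C^{4}}\le M(\tau)$ with $M$ depending on $\tau$, the $C^\infty$ norm of $u_0$, and $S$.

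The second step is the consistency estimate. Write $u^\tau_{i,j,k,s}:=u^\tau(i\Delta x,j\Delta y,k\Delta\theta,s\Delta t)$ and plug it into the discrete scheme. Expanding each forward/backward/central difference and the upwinded commutator term by Taylor's theorem in the $X_1^\epsilon,X_2^\epsilon$ directions (using the $C^4$ bound to control the remainders), one shows
\begin{equation}
u^\tau_{i,j,k,s+1} = u^\tau_{i,j,k,s} + \Delta t\,(W^2 u^\tau)_{i,j,k,s} + \Delta t\,(W^1 u^\tau)_{i,j,k,s} + \Delta t\,\rho_{i,j,k,s},
\label{plan-consistency}
\end{equation}
with a local truncation error $|\rho_{i,j,k,s}|\le C(\tau,h)\,(\Delta t + h)$ (the $h$ coming from the upwinding of the first-order terms, the $\Delta t$ from the explicit Euler step; here one also uses that the denominators $|D_{int}U|^2+\tau\ge\tau$ are bounded away from zero, so dividing by them does not destroy consistency). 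Thus the grid restriction of $u^\tau$ is an \emph{approximate} solution of the discrete equation.

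The third step is the stability/comparison step that turns consistency into convergence. Let $E_{i,j,k,s} = u^\tau_{i,j,k,s} - U_{i,j,k,s}$. Subtracting the exact discrete equation for $U$ from \eqref{plan-consistency}, and using that the operators $W^1,W^2$ differ on $u^\tau$ and $U$ by terms that, after the mean-value theorem, are linear in the differences of $E$ with coefficients having the same monotonicity signs exploited in the proof of the stability theorem above, one obtains a discrete parabolic inequality of the form $E_{s+1}\le (1+C(\tau,h)\Delta t)\|E_s\|_\infty + \Delta t\, C(\tau,h)(\Delta t+h)$ provided the CFL condition $\Delta t\le C(\tau,h)$ holds (this is exactly where the scheme's monotonicity, hence the hypothesis $\Delta t\le h^2/10$ sharpened with $\tau$-dependent constants, is used). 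A discrete Gronwall argument over $s=0,\dots,[T/\Delta t]$, together with $E_0=0$, gives $\|E_s\|_\infty \le C(\tau,h,T)(\Delta t + h)$; choosing $\Delta t\le C(\tau,h)$ small enough and $h$ already fixed, the right-hand side can be made $\le \tau^\alpha$, which is the claim. The main obstacle is the first step: producing the fourth-order derivative bound with a constant that is explicit enough in $\tau$ and $h$ to make the final threshold meaningful, since the sub-Riemannian Schauder theory of \cite{CapCitManf} must be applied to the $Y_i$-differentiated equations and one must track how the ellipticity constant $\tau/(\|\nabla_0 u_0\|_\infty^2+\tau)$ enters the estimates — everything downstream (consistency and the Gronwall iteration) is then routine bookkeeping modeled on \cite{Deck} and \cite{POSS}.
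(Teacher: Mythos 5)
Your proposal follows essentially the same route as the paper, whose own argument is only a brief sketch of exactly this strategy: for fixed $\tau$ the equation is uniformly parabolic in the sub-elliptic sense, fourth-order derivative estimates are obtained by reducing to estimates for graphs over the group as in Deckelnick--Dziuk \cite{Deck} and invoking the regularity result of Capogna, Citti and Manfredini \cite{CapCitManf}, and these feed a consistency-plus-monotone-stability (discrete Gronwall) argument for the Osher--Sethian-type scheme. The one caveat, which the theorem statement itself shares, is your concluding step: with $h$ fixed, the $O(h)$ term in the bound $C(\tau,h,T)(\Delta t+h)$ cannot be absorbed by shrinking $\Delta t$ alone, so obtaining the threshold $\tau^\alpha$ implicitly requires $h$ (not only $\Delta t$) to be small relative to $\tau$.
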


As a consequence, applying the uniqueness Theorem \ref{1}, we deduce  
the following convergence result  for the  solution of the mean 
curvature equation (\ref{Smcf}) with initial condition $u_0$ 
$$|u(i \Delta x, j \Delta y, k \Delta \theta, s \Delta t) - U(i,j,k, s) |\leq\tau^\alpha$$
as $\Delta t\leq C(\tau, h)$. 

\section{Results}\label{sec5}

In this section 
we compare the results of the model with previous works, and discuss our results. 
The completion method, first proposed in \cite{A5}, has now a solid theoretical background, 
and is compared with more recent results obtained by different authors. 
Results of our new model enhancement are proposed, 
comparing with previous results of Duits \cite{DFII}. 
Finally we show an example of inpainting and enhancement of an image.

\subsection{Inpainting results}

Since the lifting procedure is based on the selection of the orientation of level lines at every point, the algorithm performs particularity well for completing gray level images which have non vanishing gradient at every point. Hence we will start with a simple artificial image of this type. The 
algorithm performs very well for completion of curved level lines.
\begin{figure}
\centering
\includegraphics[width=0.3\textwidth]{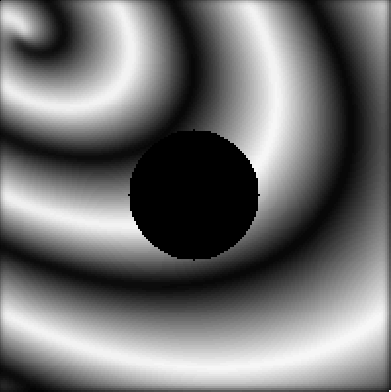}\hspace{.02\textwidth}
\includegraphics[width=0.3\textwidth]{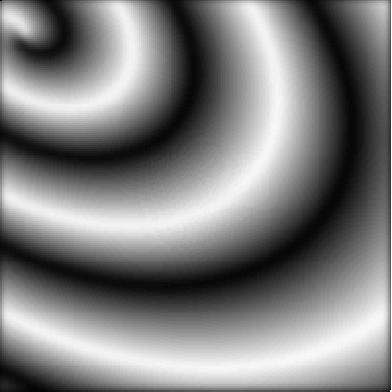}\hspace{.02\textwidth}
\caption{An example of completion performed by the algorithm. In this artificial image the image gradient is lifted in the $\mathbb{R}^2 \times S^1$ space and the black hole is completed by mean curvature flow. Since the level lines of the image are approximately circular, the algorithm performs very well.}%\cite{A15}
\end{figure}

We will now test the algorithm on natural images. In the first image a black hole is present, 
and the algorithm correctly reconstructs the missed part of the image:
\begin{figure}
\centering
\includegraphics[width=.6\textwidth]{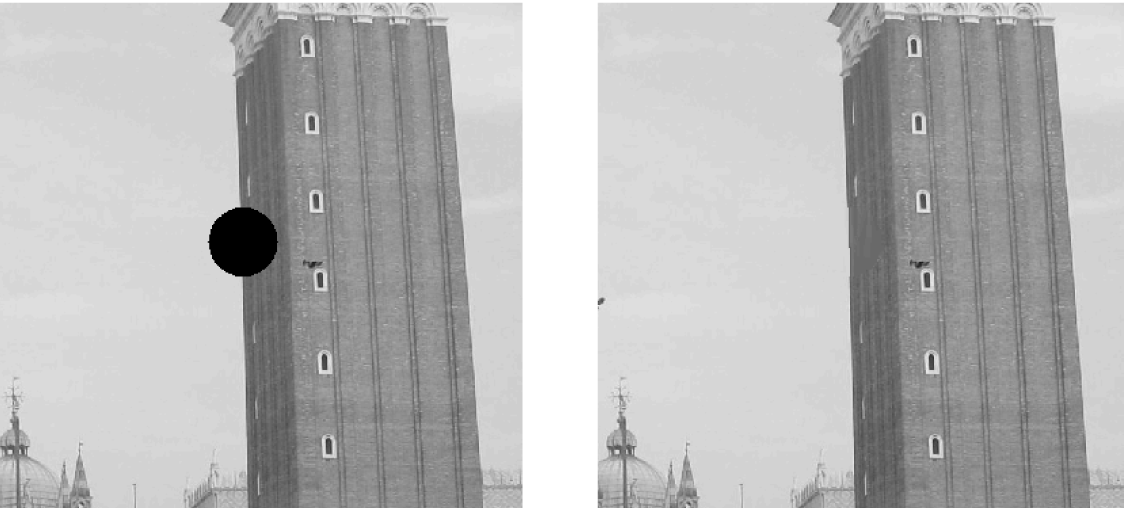}\hspace{.2\textwidth}\\
\caption{Completion result on a real image through sub-Riemannian mean curvature flow in $\mathbb{R}^2 \times S^1$, as described in the paper.} % See \cite{A15}.
\end{figure}

The model (\cite{A4}) studied here  performs 
completion via the curvature flow. Very recently Boscain et al. in \cite{Boscain}, 
tried to replace this non linear equation by simple diffusion. 
In figure \ref{fig:isokernels1} left we consider an image courtesy of U. Boscain \cite{Boscain},  
partially occluded by a grid  and show the results of completion performed in \cite{Boscain} (second image from left), by the heat equation on the 2D space (third image from left) and by the Citti and Sarti model (right). 
A detail is shown in figure \ref{detail1}.
Since the considered image is a painting, extremely smooth, with low contrast, 
the 2D heat equation is already able to perform a simple version of completion. 
In this case the implementation of sub-Riemannian diffusion \cite{Boscain} 
provides a worse result , while the curvature model reconstructs correctly 
the missed contours and level lines.

\begin{figure}
\centering
\includegraphics[width=.22\textwidth]{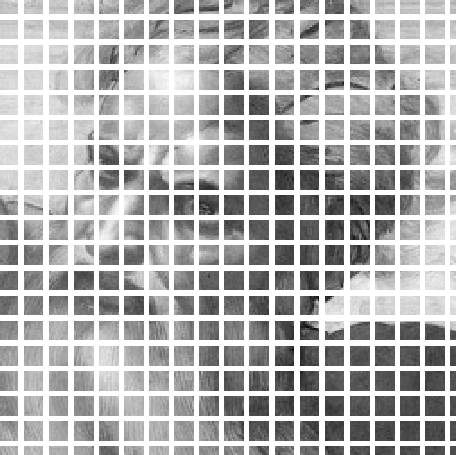}\hspace{.02\textwidth}
\includegraphics[width=.22\textwidth]{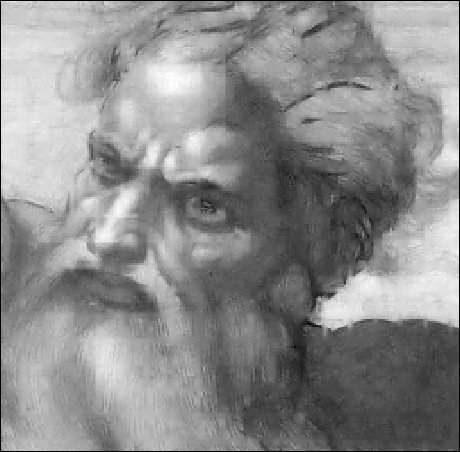}\hspace{.02\textwidth}
\includegraphics[width=.22\textwidth]{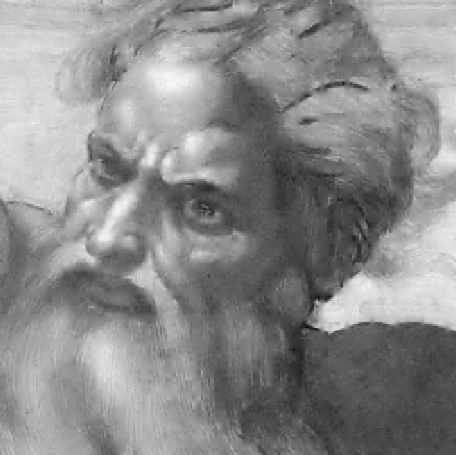}\hspace{.02\textwidth}
\includegraphics[width=.22\textwidth]{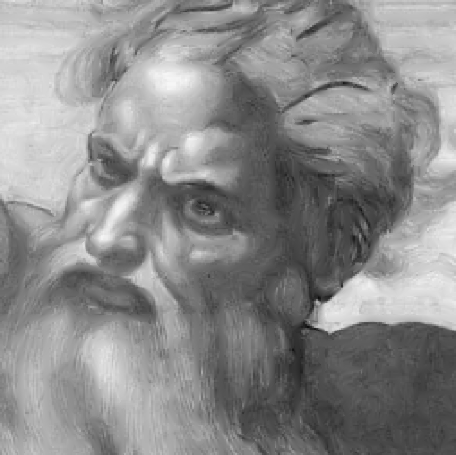}\\
\caption{Left: an occluded image (courtesy of U. Boscain (\cite{Boscain})), second image from left: the image processed in (\cite{Boscain}); third image from left: the same image processed through the heat equation right: image the inpainted using the Citti Sarti algorithm.}
\label{fig:isokernels1}
\end{figure}

\begin{figure}
\centering
\includegraphics[width=0.22\textwidth]{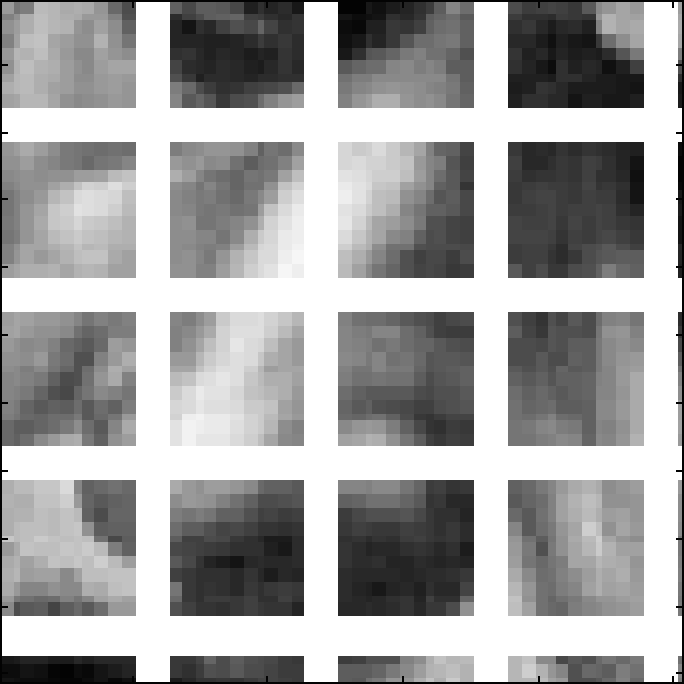}\hspace{.02\textwidth}
\includegraphics[width=0.22\textwidth]{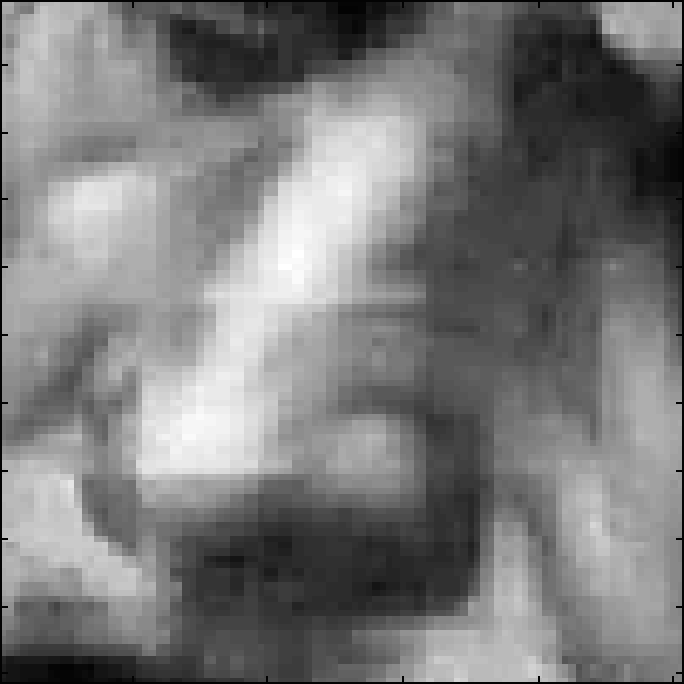}\hspace{.02\textwidth}
\includegraphics[width=0.22\textwidth]{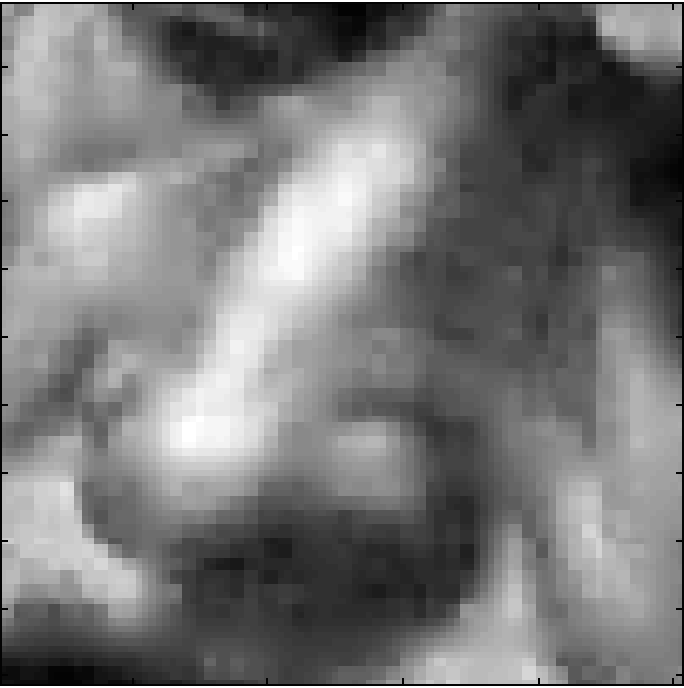}\hspace{.02\textwidth}
\includegraphics[width=0.22\textwidth]{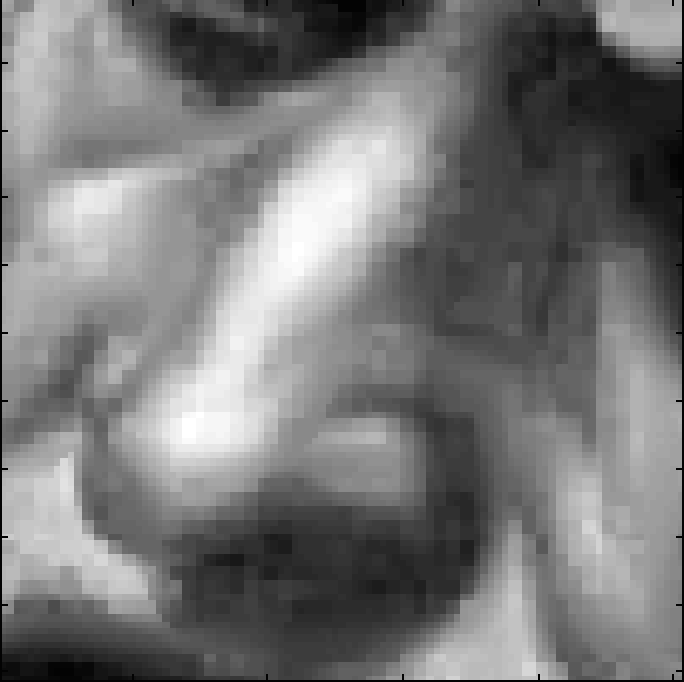}
\caption{A detail of previous image: Left: the original image (courtesy of U. Boscain (\cite{Boscain})); Second image from left: the image processed in (\cite{Boscain}); Third image from left: the image processed through the heat equation;  Right: image inpainted using the proposed algorithm.}
\label{detail1}
\end{figure}

In figure \ref{fig6} (and in the detail taken from it in figure \ref{fig7}) 
we consider an other example taken from the same paper. 
In this image the grid of points which are missed is larger, 
and the previous effect is even more evident.

\begin{figure}
\centering\includegraphics[width=.22\textwidth]{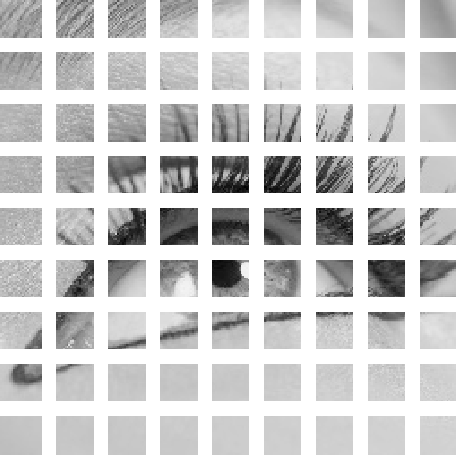}\hspace{.02\textwidth}
\includegraphics[width=.22\textwidth]{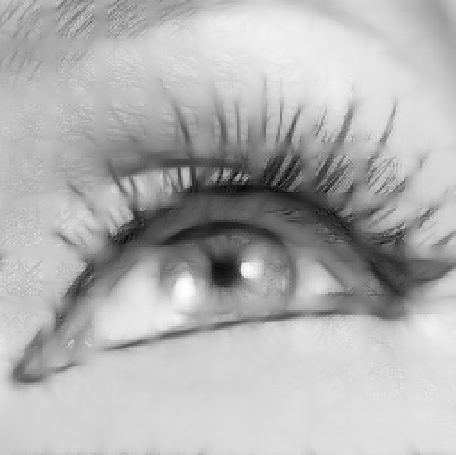}\hspace{.02\textwidth}
\includegraphics[width=.22\textwidth]{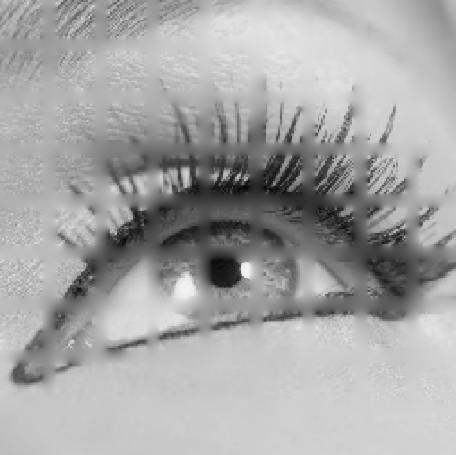}\hspace{.02\textwidth}
\includegraphics[width=.22\textwidth]{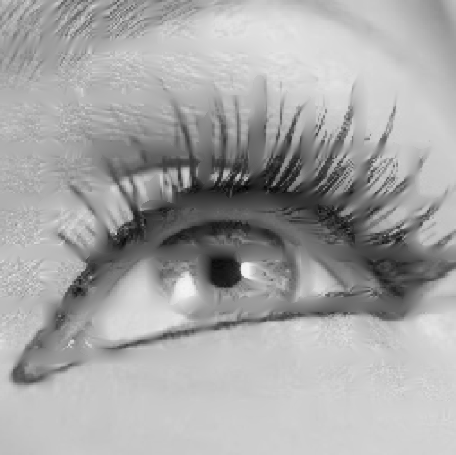}
\caption{Top left: the original image (courtesy of U. Boscain (\cite{Boscain})); Top right: the image processed  in (\cite{Boscain}); Bottom left: the image processed through the heat equation; Bottom right: image inpainted using the proposed algorithm.}
\label{fig6}
\end{figure}

\begin{figure}
\centering
\includegraphics[width=.22\textwidth]{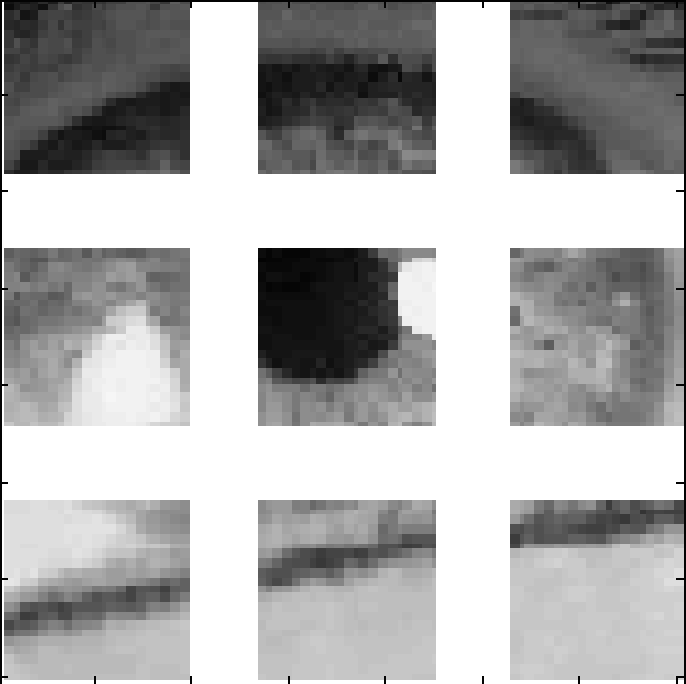}\hspace{.02\textwidth}
\includegraphics[width=.22\textwidth]{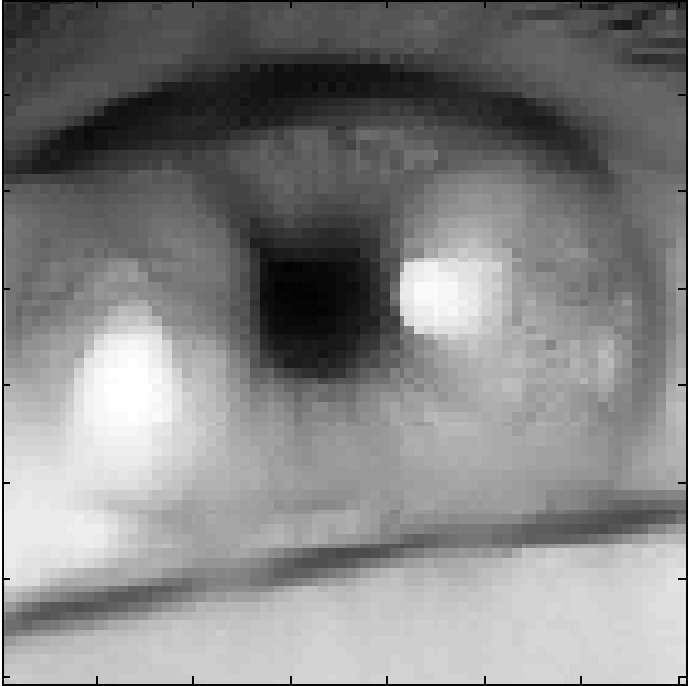}\hspace{.02\textwidth}
\includegraphics[width=.22\textwidth]{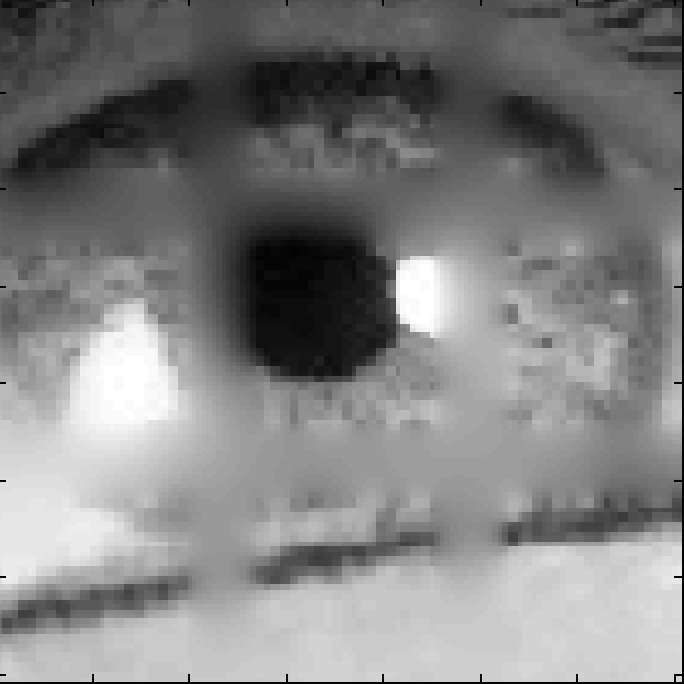}\hspace{.02\textwidth}
\includegraphics[width=.22\textwidth]{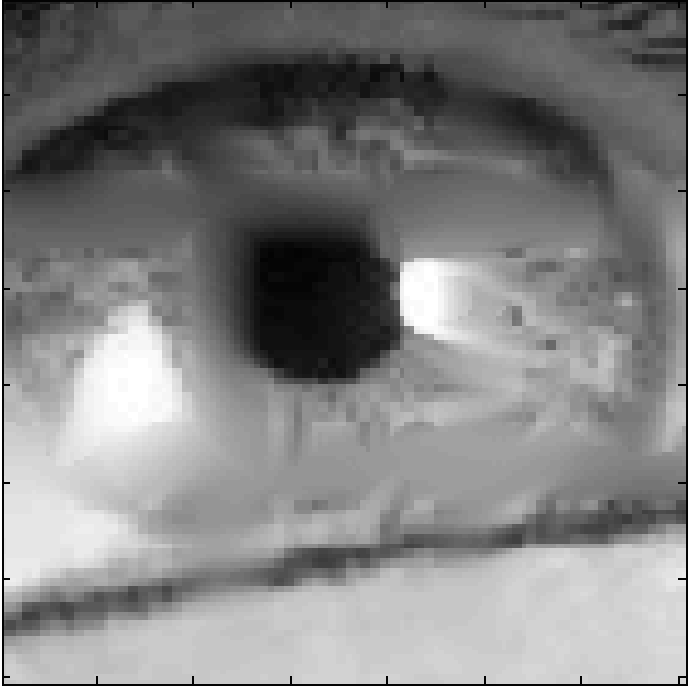}\\
\caption{Detail of the previous image.The result of our algorithm (4th right) preserves the circular level lines of the iris, while simple subRiemannian diffusion (3rd right) destroy it. Simple euclidean diffusion (3rd right)  performs  at intermediate level.}
\label{fig7}
\end{figure}

%
%
%
%\begin{figure}[H]
%\centering
%\includegraphics[width=.5\textwidth]{images/comb2.png}\hspace{.02\textwidth} \\
%\caption{Left: the image processed by U. Boscain in (\cite{Boscain}); Center: the image processed through the heat equation; Right: image inpainted using the proposed algorithm.}
%\end{figure}

In a more recent paper Boscain and al. introduced a linear diffusion
with coefficients depending 
on the gradient of the initial image (see \cite{Boscain2}), which they call heuristic. 
In figure \ref{boscainprandi} we compare the results obtained with this model, 
the heat equation on the image plane and the strongly geometric model
of Sarti and Citti.

\begin{figure}
\includegraphics[width=.22\textwidth]{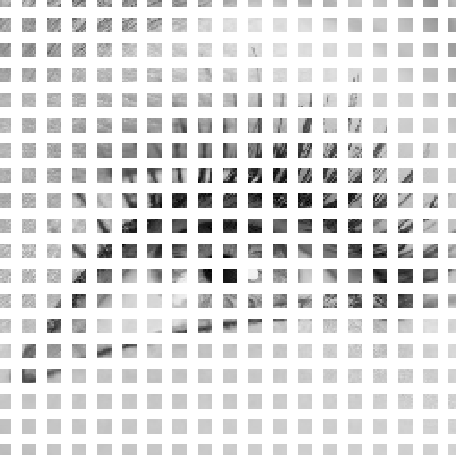}\hspace{.02\textwidth}
\includegraphics[width=.22\textwidth]{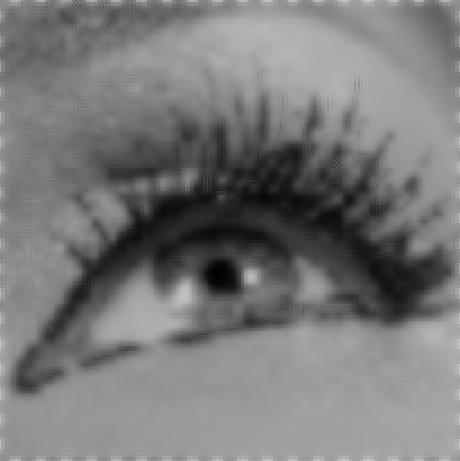}\hspace{.02\textwidth}
\includegraphics[width=.22\textwidth]{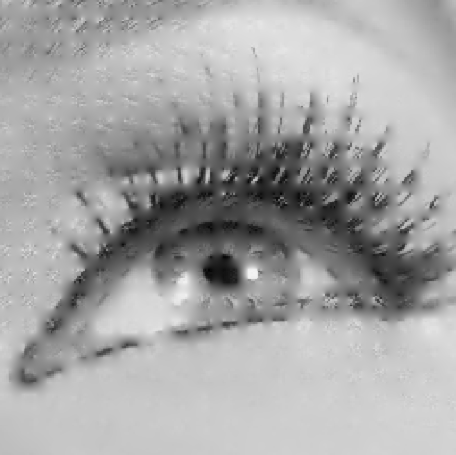}\hspace{.02\textwidth}
\includegraphics[width=.22\textwidth]{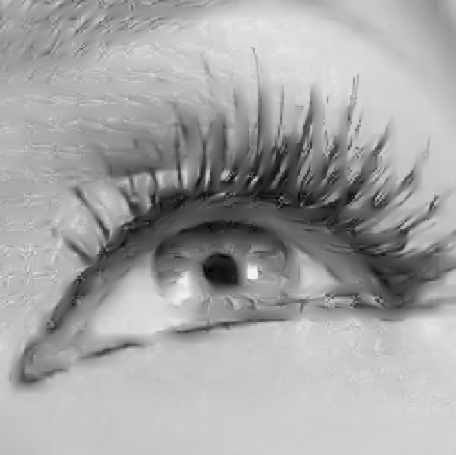}\\
\label{boscainprandi}
\caption{On the left the occluded image. From left to right: results from \cite{Boscain2}, with 2D  heat equation and our model.}
\label{fig8} 
\end{figure}

Then we test our implementation on piecewise constant images. Since the gradient is $0$ in large 
part of the image, the lifted gradient is not defined in largest part of the image. 
On the other side, since the lifting mimics the behavior of the simple cells of the V1 cortical layer, 
the Citti and Sarti algorithm is always applied on a smoothed version of the image.  We have applied it on a classical toy problems proposed for example in Bertalmio, Sapiro, Caselles and Ballester in \cite{Bertalmio}. Results are shown in figure \ref{berta}.

\begin{figure}
\centering
\includegraphics[width=.3\textwidth]{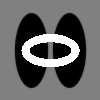}\hspace{.02\textwidth}
\includegraphics[width=.3\textwidth]{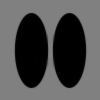}\hspace{.02\textwidth}
\caption{Inpainting a constant coefficient image with the Sarti Citti algorithm.}\label{berta}
\end{figure}

In figure \ref{farfalla} we test our method 
on an image taken from the survey \cite{inpainting}. 
The present reconstruction is correct in the part of the image characterized by strong boundaries, 
but the results of \cite{inpainting} obtained with the model of Morel and Masnou  (see \cite{Morel})  seems to be better. The main point 
is the boundary detection, which is very accurate in the model of Morel and Masnou, while here the boundaries are detected with a gradient, after smoothing the image. 
 
\begin{figure}
\centering
\includegraphics[width=.3\textwidth]{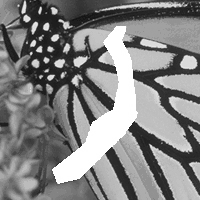}\hspace{.02\textwidth}
\includegraphics[width=.3\textwidth]{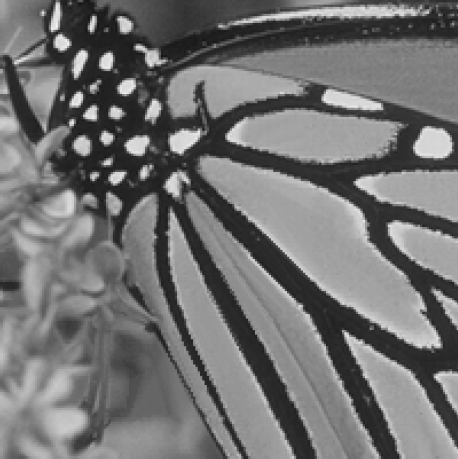}\hspace{.02\textwidth}
\includegraphics[width=.3\textwidth]{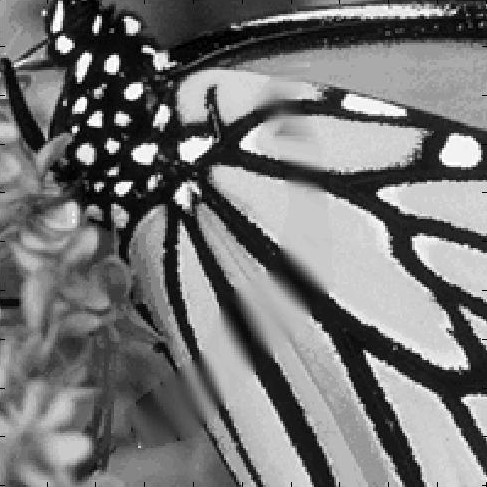}
\caption{On the left the occluded image. From left to right: results from \cite{inpainting} with the model of \cite{Morel}, and with our model.}\label{farfalla}
\end{figure} 

%
%\begin{figure}[H]
%\centering
%\includegraphics[width=.8\textwidth]{images/streamlines.png}
%\caption{The vector field and streamlines obtained from projecting the $S$ surface on the image plane before (left) the mean curvature flow  and after (right).}
%\label{fig:isokernels2}
%\end{figure}

\subsection{Enhancement results}
We will show in this section results of the application of the enhancement method we have introduced in Section 2.2.2. Let's recall that enhancement consists in an image filtering that underlines directional coherent structures. With respect to the completion problem there is no part of the image to be disoccluded and all the parts of the initial data are evolved. 

In Figure \ref{fig:isokernels11} it is shown a medical image of blood vessels to be filtered to reconstruct the fragmented vessels (courtesy of R. Duits (\cite{DFII}). The second image from left shows the enhancement computed by using CED-OS, see \cite{DFII}, while the third image shows the result obtained using the proposed method.

\begin{figure}
\centering
\includegraphics[width=.3\textwidth]{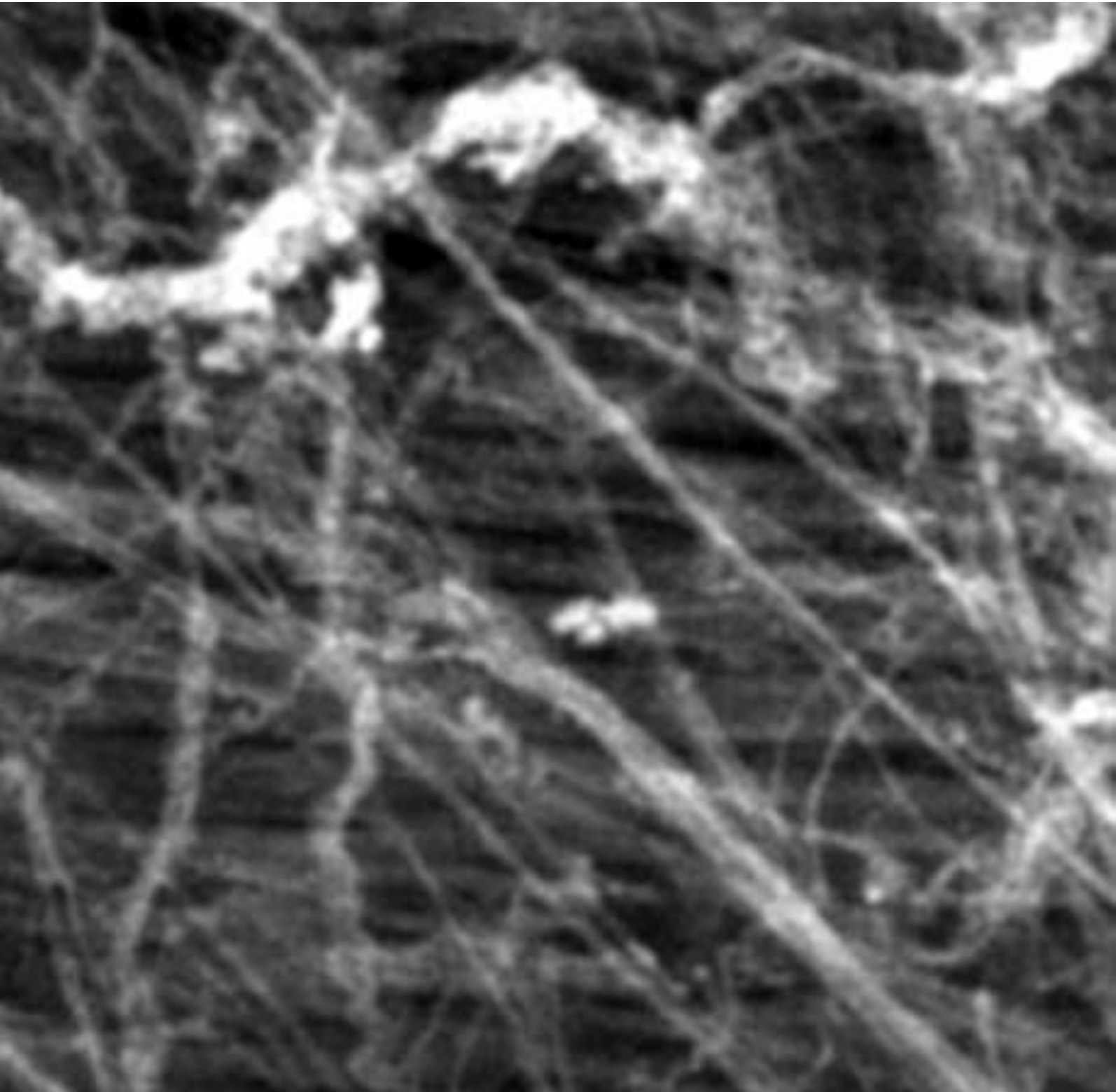}\hspace{.02\textwidth}
\includegraphics[width=.3\textwidth]{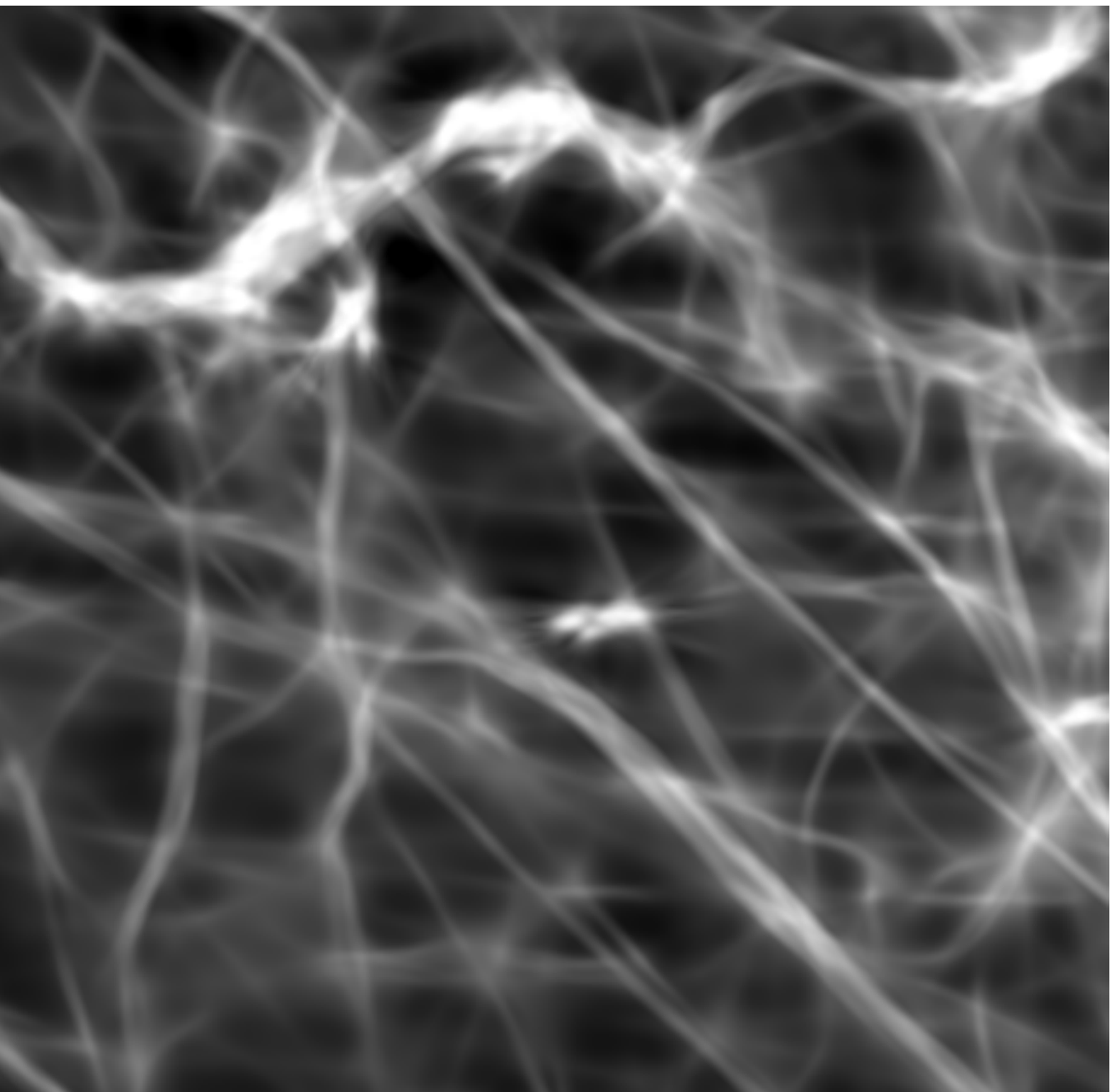}\hspace{.02\textwidth}
\includegraphics[width=.3\textwidth]{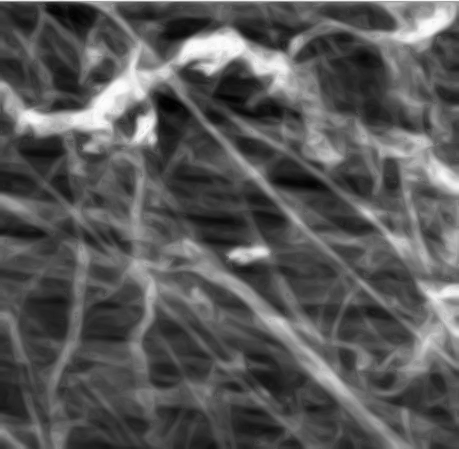}
\caption{From left to right: the original image, courtesy of R. Duits (\cite{DFII}), the enhanced image using CED-OS, see \cite{DFII} and the enhanced image obtained using the proposed method.}
\label{fig:isokernels11}
\end{figure}
%The same effect is visible in Figure \ref{fig:isokernels3}.

%\begin{figure}
%\centering
%\includegraphics[width=.3\textwidth]{images/one.png}\hspace{.02\textwidth}
%\includegraphics[width=.3\textwidth]{images/enhan-one.png}
%\caption{Another example of enhancement on a medical image.}
%\label{fig:isokernels3}
%\end{figure}

Here we finally show an example of  combination of the techniques of completion and enhancement. We see in this case that enhancement homogenizes the original non occluded part with the reconstructed one.
\begin{figure}
\centering
\includegraphics[width=.3\textwidth]{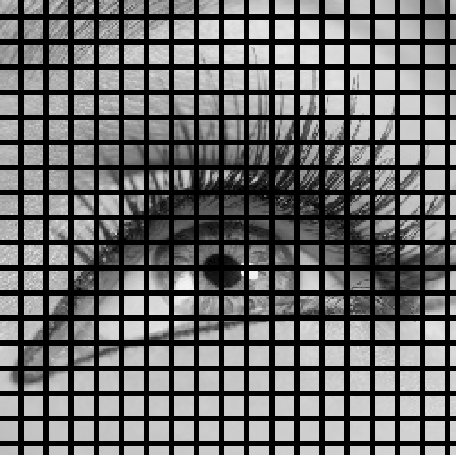}\hspace{.02\textwidth}
\includegraphics[width=.3\textwidth]{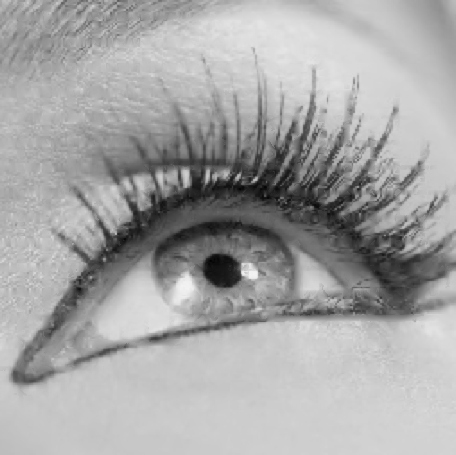}\hspace{.02\textwidth}
\includegraphics[width=.3\textwidth]{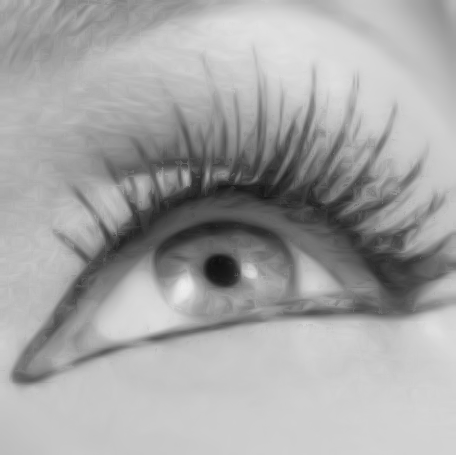}
\caption{Left: the original image (courtesy of U. Boscain (\cite{Boscain})); center: image inpainted using the proposed algorithm; right: image  inpainted and enhanced with this algorithm.}
\label{fig:isokernels21}
\end{figure}
Here we propose a detail of the previous image in order to underline the effects of the discussed techniques.
\begin{figure}
\centering
\includegraphics[width=.3\textwidth]{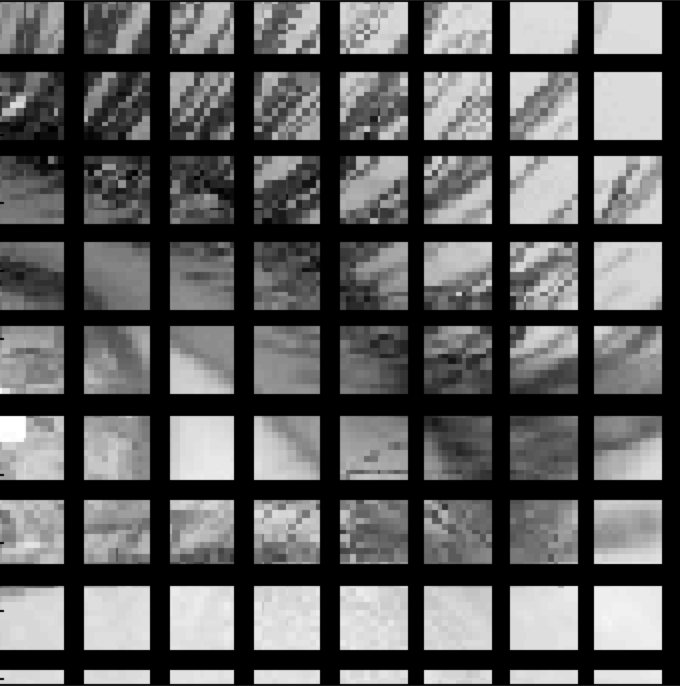}\hspace{.02\textwidth}
\includegraphics[width=.3\textwidth]{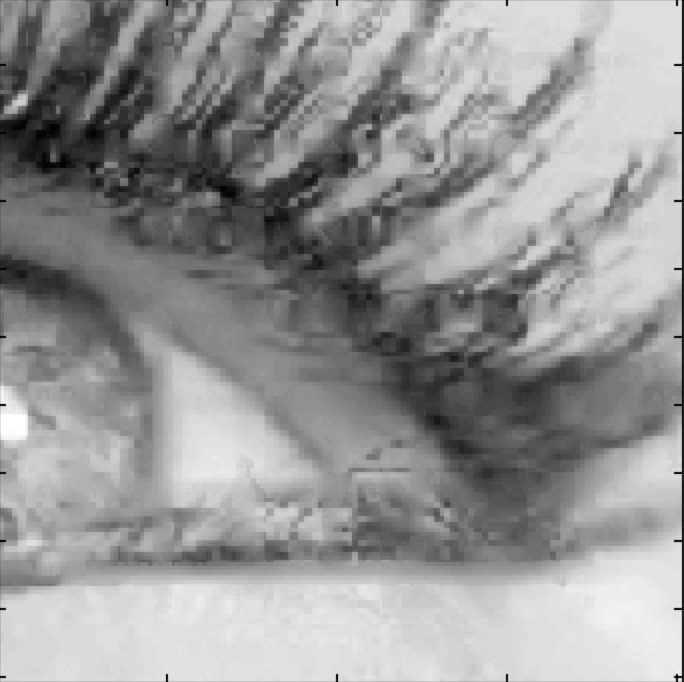}\hspace{.02\textwidth}
\includegraphics[width=.3\textwidth]{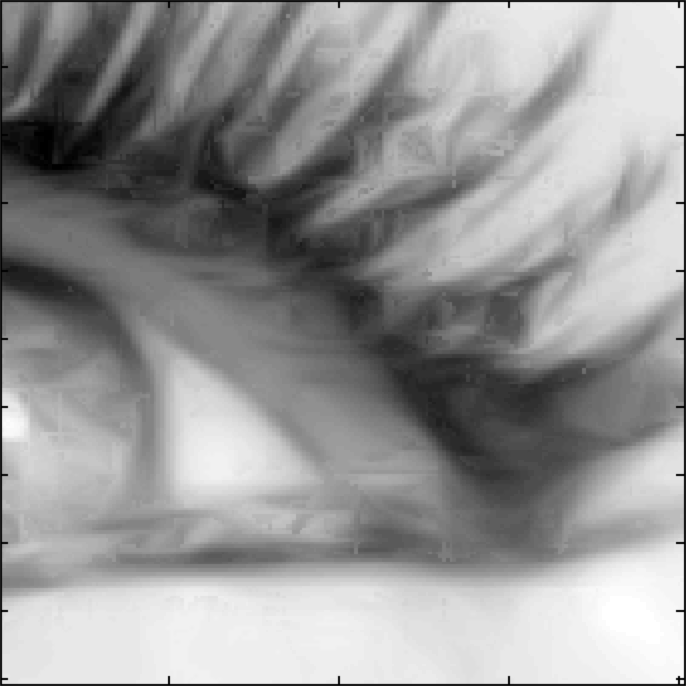}
\caption{Top left: a detail of the original image (courtesy of U. Boscain (\cite{Boscain})); Bottom left: a detail of the image inpainted using the proposed algorithm; Bottom right: same detail of the image inpainted and enhanced with this algorithm.}
\label{fig:detail}
\end{figure}
\section{Conclusions}
In this paper we have proved existence of viscosity solutions of the mean curvature flow PDE in $\mathbb{R}^2 \times \mathit{S}^1$ with a sub-Riemannian metric. The flow has been approximated with the Osher and Sethian technique and a sketch of the proof of convergence of the numerical scheme is provided. Results of completion and enhancing are obtained both on artificial and natural images. We also provide comparisons with others existing algorithms. In particular 
we have illustrated how the method can be used to perform enhancement and how it leads to results comparable with the classical ones of Bertalmio, Sapiro, Caselles and Ballester in \cite{Bertalmio}, of Masnou and Morel in \cite{Morel}.  
In the case of image completion we compared the technique with the recent results shown of Boscain, Chertovskih, Gauthier, Remizov in \cite{Boscain}. Furthermore the method can be applied not only to inpainting problems but also in presence of crossing edges, hence a comparison with the results of edges which cannot be done using the method proposed by Bertalmio, Sapiro, Caselles and Ballester in \cite{Bertalmio} is now possible.

\section{Acknowledgements}
The research leading to these results has received funding from the People Programme (Marie
Curie Actions) of the European Union's Seventh Framework Programme FP7/2007-2013/ under REA
grant agreement n607643. Author GS has received
funding from the European Research Council under the ECs 7th Framework
Programme (FP7/2007 2014)/ERC grant agreement No. 335555.

%\begin{thebibliography}{}            
%\rhead[\fancyplain{}{\bfseries \leftmark}]{\fancyplain{}{\bfseries\thepage}}

\end{document}